\documentclass[11pt,a4paper,leqno]{amsart}

\usepackage{common}

\usepackage{amssymb}
\usepackage{graphicx}

\title{Curved commutators in the plane}

\author{Kangwei Li}
\author{Henri Martikainen}
\author{Tuomas Oikari}

\address[K.L.]{School of Mathematical Sciences, Zhejiang Normal University, Jinhua 321004, China}
\email{kangwei.li@zjnu.edu.cn}

\address[H.M.]{Department of Mathematics and Statistics, Washington University
in St. Louis, 1 Brookings Drive, St. Louis, MO 63130, USA}
\email{henri@wustl.edu}

\address[T.O.]{Departament de Matem\`atiques, Universitat Aut\`onoma de Barcelona,
	Edifici C Facultat de Ci\`encies, 08193 Bellaterra (Barcelona), Catalonia}
\email{tuomas.oikari@gmail.com
}

\makeatletter
\@namedef{subjclassname@2010}{
    \textup{2010} Mathematics Subject Classification}
\makeatother

\subjclass[2010]{42B20}
\keywords{Singular integrals, commutators, monomial curves, non-vanishing torsion}

\thispagestyle{empty}
\begin{document}

\allowdisplaybreaks

\begin{abstract}
	{
		We complete the $L^p$ boundedness theory of commutators of Hilbert transforms
		along monomial curves by providing the previously missing lower bounds. This optimal result
		now covers all monomial curves while the previous result assumed the curve to intersect adjacent quadrants of the plane.
		We also develop, under a qualitative $\BMO$ assumption of the symbol,
		the corresponding quantitative lower bound in the context of
		curves with non-vanishing torsion.
	}
\end{abstract}

\maketitle

\section{Introduction}

We push forward the theory of commutators of singular integrals along curves
by the third named author \cite{OIKARI-PARABOLIC} and Bongers--Guo--Li--Wick
\cite{BGLW}. First of all, commutators of singular integral operators (SIOs) $T$ and functions $b$
have the general form $[b, T]f := bTf - T(bf)$. The theory and applications
of commutator estimates with standard SIOs,
such as, the Riesz transforms $R_j f(x) := \int \frac{x_j-y_j}{|x-y|^{d+1}} f(y)\ud y$,
are extremely rich and well-developed. This includes everything from classical contributions, such as, \cite{CRW}
to more recent state-of-the-art characterizations \cite{HyCom}. A fundamental problem has been to \emph{characterize}
when $[b, T]$ maps $L^p\to L^p$ in terms of a suitable function space of the symbol, often $\BMO$.

Our setting is significantly different in that we consider
singular integrals along \emph{curves} $\gamma$, in particular, the Hilbert transform $H_{\gamma}$
along a monomial curve, where
$$
	H_\gamma f(x) := \int_{\R} f(x-\gamma(t)) \frac{\ud t}{t}.
$$
Here the monomial curve is given by
$$
	\gamma(t) =
	\begin{cases}
		(\epsilon_1 |t|^{\beta_1}, \epsilon_2 |t|^{\beta_2}) & t>0,     \\
		(\delta_1 |t|^{\beta_1}, \delta_2 |t|^{\beta_2})     & t \le 0,
	\end{cases}
$$
$\beta_2>\beta_1>0$, $\epsilon_i, \delta_i = \pm 1$ with $\epsilon_j \ne \delta_j$ for
at least one $j \in \{1,2\}$.
In this curved setting all commutator estimates are very recent and
certainly not yet fully developed.

First, in \cite{BGLW} upper bounds of the form
\begin{align*}
	\| [b, H_\gamma] \|_{L^p\to L^p} & \lesssim
	\sup_{\substack{Q = I \times J                                           \\ \ell(I)^{1/\beta_1} = \ell(J)^{1/\beta_2}}}
	\frac 1{|Q|}\int_Q|b- \langle b\rangle_Q|\ =: \|b\|_{\BMO_\gamma(\R^2)}, \\
	\langle b \rangle_Q              & := \frac{1}{|Q|} \int_Q b,
\end{align*}
were proved, giving sufficiency in terms of a $\gamma$-adapted
$\BMO$ space -- for instance, the parabolic $\BMO$ space when $\gamma(t) = (t, t^2)$.
This was based on sparse domination in this setting by Cladek and Ou \cite{ClOu}.
The corresponding lower bound, or necessity, was left missing and was later proved in \cite{OIKARI-PARABOLIC}
using a completely new curved adaptation of the recently very successful approximate weak factorization
method of \cite{HyCom}. However, even \cite{OIKARI-PARABOLIC} did not deal with all of the monomial curves.
The argument only worked for those
monomial curves that intersect adjacent quadrants of the plane -- that is $\epsilon_j = \delta_j$
for exactly one $j \in \{1,2\}$. Moreover,
\cite{OIKARI-PARABOLIC} only dealt with the parabolic case $\gamma(t) = (t, t^2)$ in detail.

We provide a new argument
that, importantly, is able to deal with curves intersecting opposite quadrants.
In addition, we provide full details for general $\beta$ parameters.
To fully complete the theory, the case of interest to us is
$\gamma(t)= (|t|^{\beta_1} \sign t, |t|^{\beta_2} \sign t)$.
The following theorem, our first main result, thus completes the $L^p \to L^p$ commutator boundedness
theory for the Hilbert transform along monomial curves.

\begin{thm}\label{thm:main}
	Let $b\in L_{\loc}^1(\R^2;\C)$ and $H_\gamma f(x) := \int_{\R} f(x-\gamma(t)) \frac{\ud t}{t}$
	be the Hilbert transform along the curve $$\gamma(t)= (|t|^{\beta_1} \sign t, |t|^{\beta_2} \sign t), $$ where
	$\beta_2>\beta_1>0$. Let $1 < p < \infty$ and suppose that $[b, H_\gamma]$ is a bounded operator on $L^p.$
	Then $b\in \BMO_\gamma(\R^2)$ -- in fact, we have the quantitative commutator lower bound
	$$
		\|b\|_{\BMO_\gamma(\R^2)}
		\lesssim \| [b, H_\gamma] \|_{L^p\to L^p}.
	$$
\end{thm}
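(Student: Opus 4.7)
The plan is to adapt the curved approximate weak factorization (AWF) method of \cite{OIKARI-PARABOLIC} to the opposite-quadrant geometry of $\gamma(t)=(|t|^{\beta_1}\sign t,|t|^{\beta_2}\sign t)$. Fix a $\gamma$-cube $Q=I\times J$. After reducing to real-valued $b$ and applying a standard median decomposition, it suffices to locate $E_+,E_-\subset Q$ with $|E_\pm|\gtrsim |Q|$ such that
$$\frac{1}{|Q|}\int_Q|b-\langle b\rangle_Q|\;\lesssim\;\frac{1}{|Q|^2}\int_{E_+}\!\int_{E_-}(b(x)-b(y))\,\ud x\,\ud y,$$
whereupon the task reduces to controlling the right-hand side by $\|[b,H_\gamma]\|_{L^p\to L^p}$.

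The AWF step selects an auxiliary $\gamma$-cube $\widetilde Q$, well separated from $Q$ in the curved metric, such that for each $x\in Q$ there is an interval $T_x$ of parameters $t$, centered away from $0$ and of length comparable to the natural $\gamma$-scale of $Q$, with $x-\gamma(t)\in\widetilde Q$. Testing against $f=1_{E_+}/|E_+|$ and a suitable $g$ supported on $\widetilde Q$ (chosen to recover the factor $b-\langle b\rangle_{\widetilde Q}$) produces an approximate identity of the form $\int f\,(b-\langle b\rangle_{\widetilde Q})\approx\int f\cdot [b,H_\gamma]g$, and H\"older combined with the commutator hypothesis then closes the argument.

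The main obstacle is the construction of $\widetilde Q$. In the adjacent-quadrant case treated in \cite{OIKARI-PARABOLIC}, the combinations $\gamma(t)\pm\gamma(-t)$ simplify to axis-aligned vectors, so $\widetilde Q$ can be taken as a coordinate translate of $Q$ and the AWF then iterated one coordinate at a time. Here $\gamma$ is odd, so $\gamma(t)+\gamma(-t)=0$ and $\gamma(t)-\gamma(-t)=2\gamma(t)$, and only shifts along the curve itself are immediately available. The natural choice $\widetilde Q:=Q+\gamma(t_0)$ is accessed by parameters $t\approx -t_0$, but then the opposite branch $t\approx+t_0$ of the $H_\gamma$ kernel introduces a symmetric self-interaction with the reflected region $Q-\gamma(t_0)$ that must be neutralized.

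To resolve this I expect to split $H_\gamma=H_\gamma^++H_\gamma^-$ according to $\sign t$ \emph{inside} the test-function identity (neither half is individually $L^p$-bounded, so the split cannot be made at the operator level), designating one branch to implement the shift $Q\to\widetilde Q$ and exhibiting the other branch as a commutator applied to a second test function supported on the reflected cube. Both resulting terms are then estimable by $\|[b,H_\gamma]\|_{L^p\to L^p}$ via duality, after which the argument closes along the lines of the curved AWF of \cite{OIKARI-PARABOLIC,HyCom} with routine bookkeeping for general $\beta_1<\beta_2$.
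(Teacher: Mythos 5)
Your overall framework (curved approximate weak factorization, reduction to $\beta_1=1$ by a change of variables, dualizing the oscillation against a bounded mean-zero test function) is the paper's framework, but the two points where your plan departs from a sketch and becomes specific are, respectively, a misdiagnosis and a construction that does not work. First, the obstacle you single out -- that the branch $t\approx +t_0$ of the kernel creates a ``self-interaction with the reflected region $Q-\gamma(t_0)$'' which must be neutralized by splitting $H_\gamma=H_\gamma^++H_\gamma^-$ and introducing a test function on $Q-\gamma(t_0)$ -- is not present once the supports are positioned correctly. In the paper all auxiliary sets lie up and to the right of $Q$ along the curve, so in every pairing ($H_\gamma^*g_{W_1}$ on $Q$, $H_\gamma h_Q$ and $H_\gamma g_P$ on $W_1$, $H_\gamma^* h_{W_1}$ on $P$) only one sign of $t$ can contribute (the opposite sign moves the point into the wrong quadrant relative to the target set, cf.\ Lemma \ref{lem:11}); no splitting of $H_\gamma$ and no reflected cube ever enter, and the commutator hypothesis is applied to the full operator throughout.

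Second, and more seriously, the single-shift choice $\widetilde Q=Q+\gamma(t_0)$ cannot close the argument, and this is precisely why the paper's construction is more elaborate. Two incompatible things are asserted of your $\widetilde Q$: if it is a $\gamma$-cube at large curve-separation $A\ell(I)$, then for $x\in Q$ the window $\{t: x-\gamma(t)\in\widetilde Q\}$ has length $\sim \ell(I)/A^{\beta-1}$, not $\sim\ell(I)$; and, fatally, for fixed $y\in\widetilde Q$ the backward curve $y-\gamma(t)$ meets $Q$ only in a thin sliver, so the error term of a one-hop factorization (the part of $f$ transported onto $\widetilde Q$) only sees $f$ on that sliver, the cancellation $\int_Q f=0$ cannot be exploited, and the error is of size $\|f\|_\infty$ -- there is no contraction and nothing to absorb. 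The paper's actual new idea is exactly what replaces this: a \emph{two-scale} translate $P=Q+\gamma(A_1\ell(I))+\gamma(A_2\ell(I))$ with $A_2=C_\beta A_1$, hops through the lens-shaped intersection regions $W_1,W_2$ of the forward tube of $Q$ and backward tube of $P$ (where the window from $Q$ genuinely has length $\sim\ell(I)$, Lemma \ref{lem:width}), a weighted function $g_{W_1}$ correcting the degeneracy of $|I(y,P)|$ near the corners of $P$ (Lemmas \ref{lem:width1}--\ref{lem:inP}), and the change of variables $(t,s)\mapsto z+\gamma(s)-\gamma(t)$ showing the composite kernel is nearly constant as $A_1\to\infty$, so that $\int_Q f=0$ makes the error $\lesssim\varepsilon\|f\|_\infty 1_P$; a second, symmetric factorization then returns the error to $Q$ with mean zero and size $\varepsilon^2$, which is absorbed into $\int_Q|b-\langle b\rangle_Q|$. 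None of this -- the scale jump, the intermediate regions with their uniqueness property, the near-constancy estimates, and the two-step return needed for absorption -- is supplied or replaced by your branch-splitting device, so as it stands the proposal has a genuine gap at the heart of the proof; the remaining steps are also far from ``routine bookkeeping,'' though your reduction to $\beta_1=1$ and the median/duality reduction are fine and consistent with the paper.
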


Approximate weak factorization (awf) arguments rely on
setting up suitable translations of cubes that respect the structure of the underlying
singular integral, and this is significantly harder in curved settings.
Moreover, the proof presented here is different and more complicated
than the one provided in \cite{OIKARI-PARABOLIC}.
In particular, we introduce an additional parameter into the construction of the
underlying geometry that amounts to a geometric scale jump between two successive factorizations.
The previous curved proof from \cite{OIKARI-PARABOLIC}
did not have this aspect, instead exploiting the symmetry of the graph of $\gamma$
across the $x_2$-axis.

We have also written the proofs of all of our auxiliary results so that they are
purely analytic and formally easier to check, as compared to relying on geometric facts; in particular,
even when the claim is similar, our proofs are different in style from those given in \cite{OIKARI-PARABOLIC}.
We handle the general $\beta$ parameters with a change of variables
and a careful argument adapted to the case $\beta_1 = 1$ and $\beta_2 > 1$.
The change of variables is not complicated but at the very least constitutes a major simplification -- the proof, at least as written now, would not otherwise
work with general $\beta$.

We also study commutators of Hilbert transforms along boundedly supported
curves with non-vanishing torsion.
The non-vanishing torsion condition ensures that the vectors
$\gamma'(0)$ and $\gamma''(0)$ are linearly independent.
While the curve can in this setting lack a true nonisotropic dilation structure,
the assumptions nevertheless allow one to develop the corresponding
theory for the truncated Hilbert transform
$$
	f \mapsto \int_{-1}^1 f(x-\gamma(t)) \frac{\ud t}{t}.
$$
Results of this flavor, but only regarding commutator upper bounds,
have been relatively briefly studied at least in \cites{ClOu, BGLW}.
In section \ref{sec:nonvan} we carefully develop the corresponding
commutator lower bound theory by bootstrapping it from the parabolic
results of \cite{OIKARI-PARABOLIC}. However, in this non-vanishing torsion setting
our quantitative results currently require us to impose an \emph{a priori}
qualitative $\BMO$ assumption on the symbol $b$. The technical details
of this settings are subtle and interesting.

\subsection*{Acknowledgements}
K. Li was supported by the National Natural Science Foundation of China through
project numbers 12222114 and 12001400. This material is based upon work supported
by the National Science Foundation (NSF) under Grant No. 2247234 (H. Martikainen).
H.M. was, in addition, supported by the Simons Foundation through MP-TSM-00002361
(travel support for mathematicians). T.O. was supported by the Finnish Academy of Science and Letters.

\subsection*{Statements and Declarations}
\subsubsection*{Data availability statement} There is no data.
\subsubsection*{Conflict of interest statement} There is no conflict of
interest.

\section{Preliminaries}\label{sect:pain}

\subsection*{Change of variables}

We study the following monomial curve
\begin{equation*}
	\gamma(t)=
	( |t|^{\beta_1} \sign t, |t|^{\beta_2} \sign t),\quad \beta_2>\beta_1>0.
\end{equation*}
The complementary case of curves $\gamma$ satisfying that their graph ``intersects adjacent quadrants''
was dealt with in \cite{OIKARI-PARABOLIC}; although, the argument was explicitly given only
in the parabolic case $\gamma(t) = (t, t^2)$. In fact, many parts of the proofs become, at the very least,
significantly more laborious to run through directly with these general
$\beta = (\beta_1,\beta_2)$ and thus, in retrospect, the
following simple change of variables becomes very useful:
\begin{equation}\label{eq:CoVez}
	\begin{split}
		H_\gamma f(x) & =\int_{-\infty}^0 f(x-\gamma(t) )\frac{\ud t } t +\int_0^\infty f(x-\gamma(t) )\frac{\ud t } t          \\
		              & = \int_{-\infty}^0 f(x+ (|t|^{\beta_1}, |t|^{\beta_2}) )\frac{\ud t } t +\int_0^{\infty}
		f(x- (|t|^{\beta_1}, |t|^{\beta_2}) )\frac{\ud t } t                                                                    \\
		              & = \int_{\infty}^0 f(x+ (|u|, |u|^{\beta_2/{\beta_1}}) )\frac{\ud (-u^{1/{\beta_1}})} {-u^{1/{\beta_1}}}
		+\int_0^{\infty} f(x- (|u|, |u|^{\beta_2/{\beta_1}}) )\frac{\ud (u^{1/{\beta_1}}) } {u^{1/{\beta_1}}}                   \\
		              & = \frac 1 {\beta_1}\Big( \int_{\infty}^0 f(x+ (|u|, |u|^{\beta_2/{\beta_1}}) )\frac{\ud u}u
		+ \int_0^{\infty} f(x- (|u|, |u|^{\beta_2/{\beta_1}}) )\frac{\ud u}u\Big)                                               \\
		              & =  \frac 1 {\beta_1} H_{\tilde \gamma} f(x),
	\end{split}
\end{equation}
where $\tilde \gamma (t)= ( |t| \sign t, |t|^{\beta_2/{\beta_1}} \sign t)$.
Hence we will assume in the future that $\beta_1=1$ and $\beta_2=:\beta>1$.

\subsection{Geometry behind the factorization}\label{sec:geom} Next, we define several auxiliary sets that we will be working with for the rest of the article, see the Figure \ref{fig:geomfull} below for a sketch.
Before the actual construction of those sets, we motivate the configuration with a brief
explanation based on the picture.
Denote
$
	\phi(a,B) := \{a+\gamma(t): t\in \mathbb{R}\} \cap B
$, for any point $a$ and any set $B.$
Consider a point $z\in P$ and $\phi(z,W_1)$ (the intersection of the blue curve with the set $W_1$) and for each $y\in \phi(z,W_1)$ the set $\phi(y,Q)$ (the intersection of the orange curve beginning from the point $y$ with $Q$). Then the picture indicates that if we take the union over all these segments there holds that
\begin{align}\label{eq:covering}
	Q = \bigcup_{y\in \phi(z,W_1)}\phi(y,Q),\qquad \text{ for all points } z\in P.
\end{align}
A heuristic reason why this should hold is as follows.
Consider the cube $Q$ and its left top corner $v_{lt}$ and its right bottom corner $v_{rb}$ (indicated in the picture) and the sets $\phi(v_{lt},W_1)$ and $\phi(v_{rb},W_1)$ (intersection of $W_1$ with the red curves that begin from those points). Then we have non-empty intersections $y_t := \phi(z,W_1)\cap \phi(v_{lt},W_1)$ and $y_b := \phi(z,W_1)\cap \phi(v_{rb},W_1).$ This means that
$1)$ there holds that $v_{lt} = \phi(y_t,Q)$ and $v_{rb} = \phi(y_b,Q) $
and, hence, $2)$ that the cube $Q$ lies in between the orange curves that begin from these points. That this continues to hold for each point $z\in P$ follows from the fact that we choose $A_2\gg A_1$ in the construction of the sets, see below.
Now, we can see that if we take the union over all the sets $\phi(y,Q)$ (indicated in orange) these fully cover the cube $Q.$

The identity \eqref{eq:covering} is used in Section \ref{sect:AWF} when we perform the approximate weak factorization argument to prove Proposition \ref{prop:awf} and without it the properties on the line \eqref{eq:prop:AWF1} would fail.

\begin{figure}[h]
	\centering
	\includegraphics[scale=0.65]{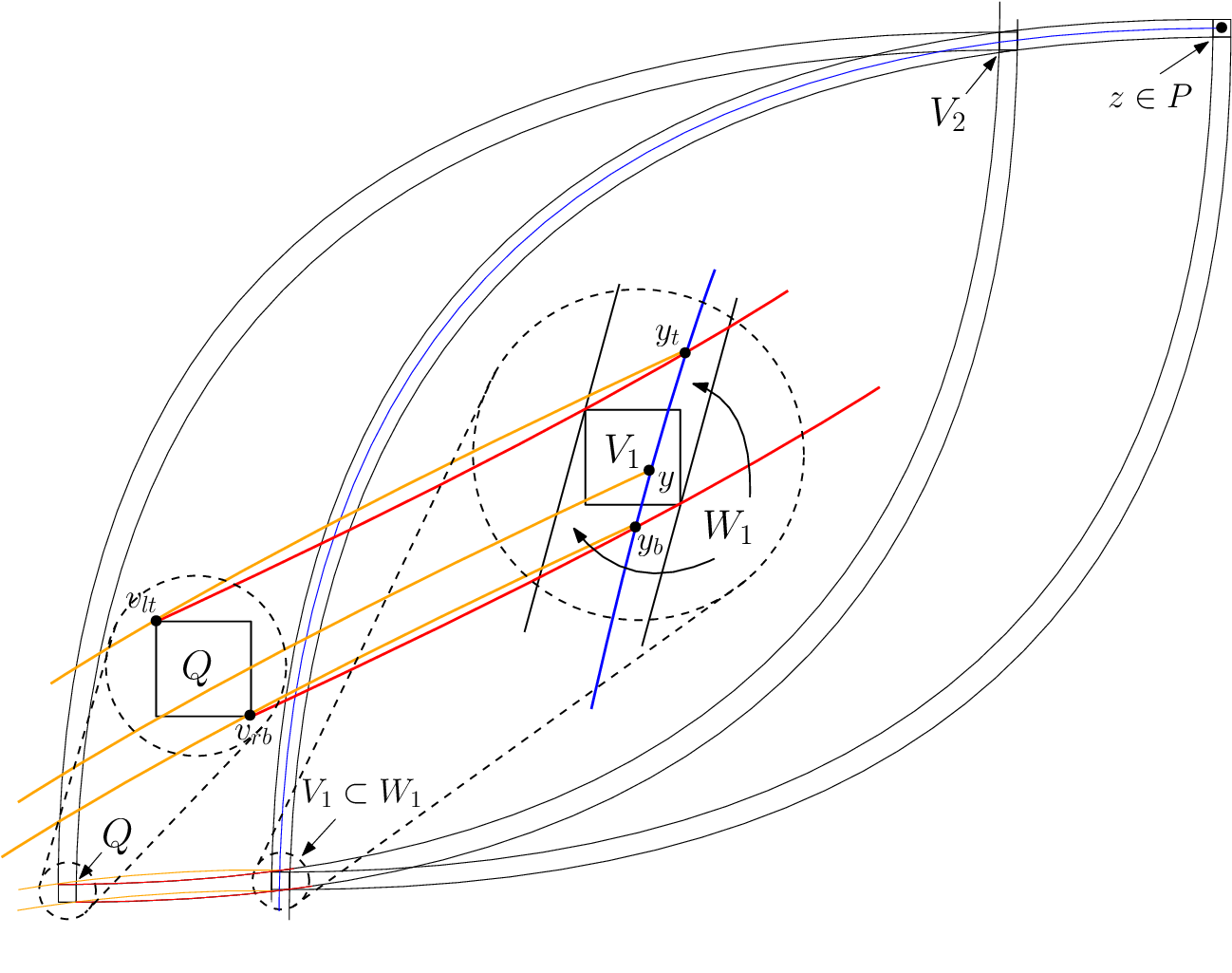}
	\caption{The geometric configuration}\label{fig:geomfull}
\end{figure}

We then carefully define these sets.
Fix a closed rectangle $Q=I\times J\in \mathcal R_\gamma$ -- this means that
$\ell(J)=\ell(I)^{\beta}$. Let
$$
	A_1>10,\qquad A_2 := \big(3^{\beta / (\beta-1)} + 10 \big)A_1 =: C_{\beta}A_1.
$$
Define
\[
	V_1= Q+ \gamma(A_1\ell(I)),\quad P=V_1+\gamma (A_2\ell(I)).
\]
We also define
$$
	V_2= P+\gamma(-A_1\ell(I)).
$$
It then holds that
\[
	Q= V_2+ \gamma(-A_2\ell(I)).
\]
Next, we define
\begin{align*}
	\widetilde Q= \{ Q+\gamma(t): t\ge 0\},\quad \widetilde P= \{ P+\gamma(s): s\le 0\}.
\end{align*}
Note that
\[
	V_1 = Q+ \gamma(A_1\ell(I)) = P+ \gamma(-A_2\ell(I))
\]
and
\[
	V_2= Q+ \gamma(A_2\ell(I))= P+\gamma(-A_1\ell(I)),
\]
and so $V_1 \cup V_2\subset \widetilde Q\cap  \widetilde P$.
Finally, we consider the following subset of $\widetilde Q\cap  \widetilde P$:
\begin{align*}
	W_1 := \big\{Q+\gamma(t) & \colon  (A_1-2)\ell(I)<t < (A_1+2)\ell(I)\big\}                                \\
	                         & \cap\big \{P + \gamma(s) \colon (A_2-3)\ell(I) < |s| < (A_2+3)\ell(I)  \big\}.
\end{align*}

We next study the set $W_1$ (but will define a similar set $W_2$ later).
We start with the following trivial observation.
\begin{lem}\label{lem:sizew}
	We have
	$$
		|W_1| \sim_{A_1} |Q|.
	$$
\end{lem}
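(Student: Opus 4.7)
The lemma is essentially a containment argument together with a one-line Minkowski sum estimate, so I would split it cleanly into a lower bound and an upper bound.

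For the lower bound, I would simply show that $V_1 \subset W_1$. Indeed, the identities stated just before the lemma give $V_1 = Q + \gamma(A_1 \ell(I)) = P + \gamma(-A_2 \ell(I))$. Since $t = A_1 \ell(I)$ lies in the open interval $((A_1 - 2)\ell(I), (A_1 + 2)\ell(I))$ and $|s| = A_2 \ell(I)$ lies in $((A_2 - 3)\ell(I), (A_2 + 3)\ell(I))$, the translate $V_1$ is contained in both sets whose intersection defines $W_1$. Hence $|W_1| \geq |V_1| = |Q|$, which already gives the nontrivial direction.

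For the upper bound I would use only the first of the two defining sets, writing
\[
W_1 \subset W_1^Q := \bigcup_{t \in ((A_1-2)\ell(I),\, (A_1+2)\ell(I))} \big(Q + \gamma(t)\big) = Q + \Gamma,
\]
where $\Gamma := \{\gamma(t) : (A_1-2)\ell(I) < t < (A_1+2)\ell(I)\}$. Using $\gamma(t) = (t, t^{\beta})$ on this positive range and a one-line mean value estimate in the second coordinate, $\Gamma$ is contained in a rectangle of dimensions $4\ell(I) \times C\beta A_1^{\beta - 1}\ell(I)^{\beta}$. Combining with $Q = I \times J$ and the scaling $\ell(J) = \ell(I)^{\beta}$, and recalling $A_1 > 10$, the Minkowski sum $Q + \Gamma$ fits in a rectangle of dimensions $\lesssim \ell(I) \times A_1^{\beta-1}\ell(I)^{\beta}$, so $|W_1^Q| \lesssim A_1^{\beta-1}|Q|$.

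Together the two estimates give $|Q| \leq |W_1| \lesssim A_1^{\beta-1}|Q|$, i.e.\ $|W_1| \sim_{A_1} |Q|$, as claimed. No step looks like a genuine obstacle: the only mild point is keeping track of which parameter (the $2$ for $t$ versus the $3$ for $|s|$) is used where, but both are precisely chosen so that the single relevant translate $V_1$ lies strictly inside both windows and not on their boundaries.
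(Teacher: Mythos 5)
Your argument is correct and takes essentially the same route as the paper: the lower bound is exactly the containment $V_1 \subset W_1$ (so $|W_1|\ge |V_1| = |Q|$), and the upper bound uses only the first defining family of $W_1$ to trap it in a rectangle of area $\lesssim_{A_1}|Q|$. Your Minkowski-sum box is in fact a bit tighter (giving $|W_1|\lesssim_{\beta} A_1^{\beta-1}|Q|$ versus the paper's cruder rectangle of area $\sim A_1^{\beta+1}|Q|$), but this difference is immaterial for the stated equivalence $|W_1|\sim_{A_1}|Q|$.
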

\begin{proof}
	First, notice that $V_1\subset W_1$. Second, we have
	\[
		W_1 \subset [u_1, u_1 + (A_1+3) \ell(I)]
		\times [u_2, u_2 + ((A_1+2)^{\beta}+1) \ell(I)^\beta],
	\]
	where $u=(u_1, u_2)$ is the left bottom corner of $Q$.
\end{proof}

\begin{lem}\label{lem:11}
	If $t \le |s|$ and $x + \gamma(t) = y + \gamma(s)$ for some $x \in Q$ and $y \in P$,
	then we must have $t \in ((A_1-2)\ell(I), (A_1+2)\ell(I))$ and $|s| \in ((A_2-3)\ell(I), (A_2+3)\ell(I))$.

	Moreover, for any $x\in Q$ and $y\in P$, there is a unique pair $(t,s)\in \R_+ \times \R_-$ with
	\[
		x+\gamma(t) = y+\gamma(s)\in W_1.
	\]
\end{lem}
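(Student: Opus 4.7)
The plan is to convert the vector equation $x + \gamma(t) = y + \gamma(s)$ (for $t > 0$, $s < 0$) into a scalar system in the unknowns $(t, |s|)$ and analyze it by one-variable calculus. Setting $\ell := \ell(I)$, $a := A_1\ell$, $A := A_2\ell$ and using $P = Q + \gamma(a) + \gamma(A)$, any $y \in P$ equals $z + \gamma(a) + \gamma(A)$ for a unique $z \in Q$; writing $w := z - x$ gives $|w_1| \leq \ell$ and $|w_2| \leq \ell^\beta$, and the equation becomes
\[
    t + |s| = a + A + w_1, \qquad t^\beta + |s|^\beta = a^\beta + A^\beta + w_2. \qquad (\star)
\]

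Next, for uniqueness and existence under the auxiliary constraint $t \leq |s|$, I would fix $C := t + |s|$ and note that $h(r) := r^\beta + (C - r)^\beta$ has derivative $\beta(r^{\beta-1} - (C - r)^{\beta - 1}) < 0$ on $[0, C/2)$ since $\beta > 1$, so $h$ is a strict decreasing bijection $[0, C/2] \to [2^{1-\beta} C^\beta, C^\beta]$. Uniqueness of $t$ (and hence $|s| = C - t$) is immediate; existence follows from the convexity $a^\beta + A^\beta \geq 2^{1-\beta}(a + A)^\beta$ and superadditivity $(a + A)^\beta \geq a^\beta + A^\beta$, which together with $|w_2| \leq \ell^\beta$ place the target $a^\beta + A^\beta + w_2$ inside the range of $h$, so the intermediate value theorem produces the desired $t \in (0, C/2]$.

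The quantitative localization is where the work concentrates. Writing $t = a + \alpha$, $|s| = A + \sigma$ with $\alpha + \sigma = w_1$, and applying the mean value theorem termwise to the second identity in $(\star)$ gives
\[
    \beta \xi_a^{\beta - 1} \alpha + \beta \xi_A^{\beta - 1} \sigma = w_2
\]
for some $\xi_a$ between $a$ and $t$ and $\xi_A$ between $A$ and $|s|$. Eliminating $\sigma = w_1 - \alpha$ produces the closed form
\[
    \alpha = \frac{\beta \xi_A^{\beta - 1} w_1 - w_2}{\beta(\xi_A^{\beta - 1} - \xi_a^{\beta - 1})}.
\]
I would then bootstrap: assuming $|\alpha| < 2\ell$ (which forces $|\sigma| = |w_1 - \alpha| < 3\ell$, hence $\xi_a \leq (A_1 + 2)\ell$ and $\xi_A \geq (A_2 - 3)\ell$), the precise choice $C_\beta = 3^{\beta/(\beta-1)} + 10$, which yields $C_\beta^{\beta-1} \geq 3^\beta$, pins $(\xi_a/\xi_A)^{\beta-1}$ strictly below $1$ (approaching $1/3$ in the sharp limit $\beta \to 1^+$), so the displayed quotient closes back into $|\alpha| < 2\ell$ uniformly in $\beta > 1$. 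This bootstrap is the hard part: the bound is asymptotically tight as $\beta \to 1^+$, and it is precisely this tightness that forces the exponent $3^{\beta/(\beta-1)}$ in $C_\beta$ and the specific constants $(2, 3)$ in the conclusion.

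Finally, the ``moreover'' clause drops out. For any $x \in Q$ and $y \in P$, the existence step gives a solution $(t, -|s|) \in \R_+ \times \R_-$ of $(\star)$ with $t \leq |s|$, and the quantitative bounds place the common point $x + \gamma(t) = y + \gamma(s)$ in $W_1$. Conversely, any $(t, s) \in \R_+ \times \R_-$ whose common point lies in $W_1$ must satisfy $t < (A_1 + 2)\ell < (A_2 - 3)\ell < |s|$, so falls under the uniqueness already proved.
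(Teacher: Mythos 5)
Your reduction to the scalar system and the use of the monotonicity of $r\mapsto r^\beta+(C-r)^\beta$ on $[0,C/2]$ coincide with the paper's starting point, but the heart of the lemma, namely the localization $t\in((A_1-2)\ell(I),(A_1+2)\ell(I))$ and $|s|\in((A_2-3)\ell(I),(A_2+3)\ell(I))$, is not actually proved by your argument. Your ``bootstrap'' assumes $|\alpha|<2\ell(I)$ in order to place the mean-value points in the good ranges $\xi_a\le (A_1+2)\ell(I)$, $\xi_A\ge (A_2-3)\ell(I)$, and then uses the closed formula for $\alpha$ to conclude $|\alpha|<2\ell(I)$: this is circular. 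The mean-value points move with the unknown solution, so the formula by itself cannot exclude a solution with $t\le|s|$ and $t$ near $C/2$, where $\xi_A^{\beta-1}-\xi_a^{\beta-1}$ is small and no bound on $\alpha$ results; to rule that out you assumed precisely the bound you are trying to prove. A bootstrap of this kind needs an anchor --- e.g.\ a deformation parameter (shrink $(w_1,w_2)$ to $(0,0)$, where the solution is exactly $(t,|s|)=(a,A)$), continuity of the solution in that parameter, and a no-touching step deriving the strict bound from the closed hypothesis $|\alpha|\le 2\ell(I)$ --- none of which you set up. The paper sidesteps the issue entirely: since $\eta(t)=t^\beta+(C-t)^\beta$ is strictly decreasing on the admissible range, it suffices to evaluate $\eta$ at the two \emph{fixed} points $t=(A_1\pm2)\ell(I)$ and compare with the target $(A_1^\beta+A_2^\beta)\ell(I)^\beta+x_2'-x_2$, via the inequalities $(A_1+2)^\beta+(A_2-1)^\beta<A_1^\beta+A_2^\beta-1$ and $(A_1-2)^\beta+(A_2+1)^\beta>A_1^\beta+A_2^\beta+1$ guaranteed by the choice $C_\beta=3^{\beta/(\beta-1)}+10$; monotonicity then traps the unique solution in $((A_1-2)\ell(I),(A_1+2)\ell(I))$ with no a priori information about it.

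Two smaller gaps. First, your existence step is under-justified: with the perturbations present (e.g.\ $w_1=-\ell(I)$, $w_2=+\ell(I)^\beta$) the inclusion of the target in $[2^{1-\beta}C^\beta,C^\beta]$ does not follow from bare convexity and superadditivity of $r\mapsto r^\beta$; one again needs the quantitative margin coming from $A_2\ge 3^{\beta/(\beta-1)}A_1$ (in the paper, existence is simply the bracketing $\eta((A_1+2)\ell(I))<\mathrm{target}<\eta((A_1-2)\ell(I))$ together with the intermediate value theorem, so no separate argument is needed). Second, in the ``moreover'' step you assert that any pair whose common point lies in $W_1$ satisfies $t<(A_1+2)\ell(I)<|s|$; membership of the point in $W_1$ only provides representations with possibly different base points in $Q$ and $P$, so for $t>|s|$ you must invoke the symmetric localization (swap the roles of the two translates, as the paper does right after the lemma) and then observe that such a point has first coordinate too large to lie in $W_1$.
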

\begin{proof}
	Fix $x \in Q$ and $y \in P$. By the definition of $P$, we have
	$y = x'+\gamma(A_1\ell(I))+\gamma(A_2\ell(I))$ for some $x' \in Q$.
	We derive what $x + \gamma(t) = y + \gamma(s)$ for $t \le |s|$ requires.
	For this to hold, we must have
	$$
		x+\gamma(t)=x'+\gamma(A_1\ell(I))+\gamma(A_2\ell(I))+\gamma(s).
	$$
	Componentwise this reads
	\begin{eqnarray}
		x_1+t &=& x_1'+A_1\ell(I)+A_2\ell(I)- |s|, \label{eq:e1}\\
		x_2+t^\beta&=& x_2'+(A_1\ell(I))^\beta+(A_2\ell(I))^\beta- |s|^\beta. \label{eq:e2}
	\end{eqnarray}
	Now, these equations give that
	\begin{equation}\label{eq:e9}
		(A_1^\beta+ A_2^\beta)\ell(I)^\beta+ x_2'-x_2= t^\beta+ \big[ (A_1+A_2) \ell(I)+x_1'-x_1-t \big]^\beta.
	\end{equation}
	A requirement for $t \le |s|$ is obtained by \eqref{eq:e1}:
	$$
		t = \big[ (A_1+A_2) \ell(I)+x_1'-x_1\big] - |s| \le \big[ (A_1+A_2) \ell(I)+x_1'-x_1\big] - t,
	$$
	and so
	\begin{equation}\label{eq:e03}
		t\le \big[ (A_1+A_2) \ell(I)+x_1'-x_1 \big]/2.
	\end{equation}
	In this range \eqref{eq:e03} of $t$ we note that
	$$
		\eta(t) := t^\beta+ \big[ (A_1+A_2) \ell(I)+x_1'-x_1-t \big]^\beta = {\rm{RHS}} \eqref{eq:e9}
	$$
	is a decreasing function of $t$. Indeed, a function $t \mapsto t^{\beta} + (\rho-t)^{\beta}$
	is decreasing if $t^{\beta-1} \le (\rho-t)^{\beta-1}$, which is clearly true if $t \le \rho/2$.
	Also note for future use that $(A_1+2)\ell(I) \le (A_1+A_2-1) \ell(I) /2 \le
		\big[ (A_1+A_2) \ell(I)+x_1'-x_1 \big]/2$, so $t=(A_1 + 2)\ell(I)$ is in this range with any $x, x'$.

	Using $|x_1-x_1'| \le \ell(I)$ and $|x_2-x_2'| \le \ell(J) = \ell(I)^{\beta}$,
	we see that for $(A_1 + 2)\ell(I) \le t \le \big[ (A_1+A_2) \ell(I)+x_1'-x_1 \big]/2$ we have
	\begin{align*}
		\eta(t) \le \eta\big((A_1+2)\ell(I)\big) & \le \ell(I)^\beta \Big((A_1+2)^\beta +(A_2-1)^\beta \Big)              \\
		                                         & <\ell(I)^\beta  (A_1^\beta+ A_2^\beta-1 )\le {\rm{LHS}} \eqref{eq:e9},
	\end{align*}
	which is a contradiction, thus for  \eqref{eq:e9} to hold $t< (A_1+2)\ell(I)$ necessarily.
	We used above that $(A_1+2)^\beta +(A_2-1)^\beta<  A_1^\beta+ A_2^\beta-1$ is true with our explicit
	choice of $A_2$. Indeed -- to this end, define $h(t) = t^{\beta}$ and then by the
	mean value theorem we have for $\xi_1 \in (A_1,  A_1 + 2)$ and $\xi_2 \in (A_2-1, A_2)$ that
	\begin{align*}
		A_2^\beta- (A_2-1)^\beta & = h(A_2) - h(A_2-1)                                          \\
		                         & = h'(\xi_2)(A_2-A_2+1) > h'(A_2-1) = \beta (A_2-1)^{\beta-1}
	\end{align*}
	and
	\begin{align*}
		1+ (A_1+2)^\beta- A_1^\beta & = 1 + h(A_1+2) - h(A_1)                               \\
		                            & = 1 + h'(\xi_1)(A_1+2-A_1) < 3\beta(A_1+2)^{\beta-1}.
	\end{align*}
	This together with
	\begin{align*}
		\beta (A_2-1)^{\beta-1}> \beta ( 3^{\frac \beta{\beta-1}} A_1)^{\beta-1}
		=3\beta (   3A_1)^{\beta-1}> 3\beta(A_1+2)^{\beta-1}
	\end{align*}
	gives the desired inequality.

	Using again that the function $\eta(t)$
	is decreasing for $t \leq (A_1+2)\ell(I),$ in particular for $t \le (A_1-2)\ell(I)$ we have
	\begin{align*}
		\eta(t) \ge \eta\big(  (A_1-2)\ell(I)\big) & \ge \ell(I)^\beta \Big((A_1-2)^\beta +(A_2+1)^\beta \Big) \\
		                                           & > \ell(I)^\beta  (A_1^\beta+ A_2^\beta+1 ) \ge{\rm{LHS}}
		\eqref{eq:e9},
	\end{align*}
	where we used $(A_1-2)^\beta +(A_2+1)^\beta>A_1^\beta+ A_2^\beta+1$,
	Indeed, again by the mean value theorem, we have that
	\begin{align*}
		(A_2+1)^\beta- A_2^\beta = h(A_2+1)- h(A_2)> h'(A_2)(A_2+1-A_2)=\beta A_2^{\beta-1}
	\end{align*}
	and
	\begin{align*}
		1+A_1^\beta- (A_1-2)^\beta & =1+ h(A_1)-h(A_1-2)<1+ h'(A_1)(A_1-(A_1-2))                           \\
		                           & = 1+ 2\beta A_1^{\beta-1}< 3\beta A_1^{\beta-1}< \beta A_2^{\beta-1}.
	\end{align*}
	So for \eqref{eq:e9} to hold, we must have $t > (A_1 - 2)\ell(I)$.
	Notice that now by \eqref{eq:e1} we have
	$$
		|s| = x_1' - x_1 + (A_1 + A_2)\ell(I) - t,
	$$
	and so
	$$
		|s| < (1+A_1+A_2-A_1+2)\ell(I) = (A_2+3)\ell(I)
	$$
	and
	$$
		|s| > (-1+A_1+A_2 -A_1 - 2)\ell(I) = (A_2-3)\ell(I).
	$$
	We have shown the first part of the lemma.

	To see the second part of the lemma, recall that we already showed that
	\[
		\eta\big((A_1+2)\ell(I)\big) < {\rm{LHS}} \eqref{eq:e9} \qquad \textup{and} \qquad
		\eta\big((A_1-2)\ell(I)\big) > {\rm{LHS}} \eqref{eq:e9}.
	\]
	By the strict monotonicity and continuity of $\eta$, we know that there is a unique solution
	to \eqref{eq:e9} lying in $\big( (A_1-2)\ell(I), (A_1+2)\ell(I)\big)$. This shows
	the second part of the lemma.
\end{proof}

Notice that $x + \gamma(t) = y + \gamma(s)$ if and only if $x + \gamma(|s|) = y + \gamma(-t)$.
If $|s| < |-t| = t$ we can solve the latter equation using Lemma \ref{lem:11} --
we find the unique $(|s|, -t)$ so that $|s| \in ((A_1-2)\ell(I), (A_1+2)\ell(I))$ and
$t \in ((A_2-3)\ell(I), (A_2+3)\ell(I))$ with $x + \gamma(|s|) = y + \gamma(-t)$. So the original equation
$x + \gamma(t) = y + \gamma(s)$ has, for $t > |s|$, the unique solution $(t,s)$ with
$t \in ((A_2-3)\ell(I), (A_2+3)\ell(I))$ and $|s| \in ((A_1-2)\ell(I), (A_1+2)\ell(I))$. For this reason, we define
\begin{align*}
	W_2 := \big\{Q+\gamma(t) & \colon  (A_2-3)\ell(I)<t < (A_2+3)\ell(I)\big\}                               \\
	                         & \cap\big \{P + \gamma(s) \colon (A_1-2)\ell(I) < |s| < (A_1+2)\ell(I)  \big\}
\end{align*}
and notice that
\[
	\widetilde Q\cap \widetilde P= W_1\cup W_2.
\]
We now prove results related to the pair $(Q, W_1)$ -- similar results
then also hold by symmetry for the pair $(P, W_2)$.

For $x\in Q$, the set
\[
	I(x, W_1):=\big\{t \in \big( (A_1-2)\ell(I), (A_1+2)\ell(I)\big): x+\gamma(t)\in W_1 \big\}
\]
is important for us. We have the following result.
\begin{lem}\label{lem:width}
	For all $Q \in \calR_{\gamma}$ and $x\in Q$ we have
	$$
		\frac{|I(x, W_1)|}{\ell(I)}
		\sim \frac{A_1^{1-\beta} + \beta C_{\beta}^{\beta-1}}{\beta(C_{\beta}^{\beta-1}-1)}.
	$$
	In particular, we have
	\[
		\lim_{A_1\to \infty} |I(x, W_1)|
		=  \frac{C_\beta^{\beta-1}}{C_\beta^{\beta-1}-1}\ell(I) \sim \ell(I)
	\]
	and
	$$
		\lim_{A_1\to \infty} \frac{|I(x, W_1)|}{|I(x', W_1)|} = 1
	$$
	uniformly on $Q \in \calR_{\gamma}$ and $x, x' \in Q$.
\end{lem}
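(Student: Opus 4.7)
The plan is to reduce the membership condition $x + \gamma(t) \in W_1$ via Lemma \ref{lem:11} to a two-parameter equation $F(\sigma, \alpha) = \mu$, localize the admissible set of $\sigma$ via monotonicity in each variable, and then extract its length via the mean value theorem.

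First I would non-dimensionalize. Writing $I = [u_1, u_1 + \ell(I)]$, $J = [u_2, u_2 + \ell(I)^\beta]$, and $x' = (x_1', x_2') \in Q$, set
\[
\sigma := t/\ell(I) - A_1, \quad \alpha := (x_1' - x_1)/\ell(I), \quad \mu := (x_2' - x_2)/\ell(I)^\beta,
\]
\[
c := (x_1 - u_1)/\ell(I), \quad d := (x_2 - u_2)/\ell(I)^\beta,
\]
so that $\sigma \in (-2, 2)$, $\alpha \in [-c, 1-c]$, $\mu \in [-d, 1-d]$, and $c, d \in [0, 1]$. By Lemma \ref{lem:11}, for $t$ in the prescribed range, $x + \gamma(t) \in W_1$ iff \eqref{eq:e1}--\eqref{eq:e2} can be solved for some $x' \in Q$ and $s<0$ with $|s| \in ((A_2 - 3)\ell(I), (A_2 + 3)\ell(I))$. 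The first equation fixes $|s|/\ell(I) = A_2 + \alpha - \sigma$, and substituting into the second yields $F(\sigma, \alpha) = \mu$, where
\[
F(\sigma, \alpha) := (A_1 + \sigma)^\beta + (A_2 + \alpha - \sigma)^\beta - A_1^\beta - A_2^\beta.
\]

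Next I would argue that the admissible $\sigma$-set is an interval. Since $\partial_\alpha F = \beta(A_2 + \alpha - \sigma)^{\beta-1} > 0$, $F(\sigma, \cdot)$ is strictly increasing on $[-c, 1-c]$; hence a valid pair $(\alpha, \mu) \in [-c, 1-c] \times [-d, 1-d]$ exists iff $F(\sigma, -c) \le 1-d$ and $F(\sigma, 1-c) \ge -d$. From $A_2 \ge C_\beta A_1 \ge 10 A_1$, $|\sigma| < 2$, $|\alpha| \le 1$ one reads off $A_2 + \alpha - \sigma > A_1 + \sigma$, so $\partial_\sigma F < 0$ and both inequalities are strictly decreasing in $\sigma$. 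They therefore cut out an interval $[\sigma_1, \sigma_2]$ determined by $F(\sigma_1, -c) = 1-d$ and $F(\sigma_2, 1-c) = -d$, giving $|I(x, W_1)| = (\sigma_2 - \sigma_1)\ell(I)$.

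Finally, I would compute $\sigma_2 - \sigma_1$ via the mean value theorem. Splitting
\[
-1 = F(\sigma_2, 1-c) - F(\sigma_1, -c) = \big[F(\sigma_2, 1-c) - F(\sigma_1, 1-c)\big] + \big[F(\sigma_1, 1-c) - F(\sigma_1, -c)\big],
\]
the first bracket equals $\partial_\sigma F(\tilde\sigma, 1-c)(\sigma_2 - \sigma_1)$ for some $\tilde\sigma \in (\sigma_1, \sigma_2)$, and $\partial_\sigma F(\tilde\sigma, 1-c) \sim -\beta A_1^{\beta-1}(C_\beta^{\beta-1} - 1)$ uniformly; the second bracket equals $(A_2 + 1 - c - \sigma_1)^\beta - (A_2 - c - \sigma_1)^\beta \sim \beta A_2^{\beta-1}$. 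Solving yields
\[
\sigma_2 - \sigma_1 \sim \frac{1 + \beta A_2^{\beta-1}}{\beta A_1^{\beta-1}(C_\beta^{\beta-1} - 1)} = \frac{A_1^{1-\beta} + \beta C_\beta^{\beta-1}}{\beta(C_\beta^{\beta-1} - 1)},
\]
as required. Since this leading expression is independent of $(c, d)$, and hence of $x$, and since the mean value theorem errors are $O(A_1^{-1})$ relative to the leading terms, letting $A_1 \to \infty$ gives the stated uniform limits for $|I(x, W_1)|/\ell(I)$ and for $|I(x, W_1)|/|I(x', W_1)|$.

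The main obstacle is keeping the $\sim$-estimates coming from the mean value theorem uniform over $x, x' \in Q$ and $Q \in \calR_\gamma$, which amounts to checking that $(A_1 + \sigma)^{\beta-1}$ and $(A_2 + \alpha - \sigma)^{\beta-1}$ are comparable to $A_1^{\beta-1}$ and $A_2^{\beta-1}$ respectively with multiplicative constants tending to $1$ as $A_1 \to \infty$, so that the subleading contributions are genuinely of lower order.
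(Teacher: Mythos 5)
Your proposal is correct and is essentially the paper's own argument: after reducing membership in $W_1$ to the system \eqref{eq:e1}--\eqref{eq:e2}, the extremal admissible values of $t$ correspond to the top-left and bottom-right corners of $Q$ (via Lemma \ref{lem:11}), and the mean value theorem extracts exactly the same explicit expression for the length, hence the same limits. The only differences are presentational: your normalization $\sigma = t/\ell(I)-A_1$ rescales the paper's computation, and your monotonicity-in-$\alpha$ interval-intersection argument is a tidier substitute for the paper's separate intermediate-value check that every intermediate $t\in(t_1,t_1+h)$ also satisfies $x+\gamma(t)\in W_1$.
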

\begin{proof}
	Fix $x\in Q$. By definition, if $x+\gamma(t) \in W_1$, then
	there exists some $y\in P$ and $s<0$ with
	$|s|\in \big( (A_2-3)\ell(I), (A_2+3)\ell(I)\big)$ such that
	\[
		x+\gamma(t)= y+\gamma(s).
	\]
	We claim that if $t_1$ is the minimal value such that $x+\gamma(t_1)\in W_1$,
	then the corresponding $y$ should be the  left top vertex of $P;$ in other words, in the equation \eqref{eq:e9} the point $(x_1',x_2')$ should be the left top vertex of $Q.$ But this is immediate by
	\eqref{eq:e9} since a bigger $x_2'$ and a smaller $x_1'$ require a monotonically smaller $t.$

	So there is some $s_1<0$ with $-s_1 = |s_1|\in \big( (A_2-3)\ell(I), (A_2+3)\ell(I)\big)$ such that
	\begin{equation}\label{eq:e10}
		x+ \gamma(t_1)= v_{lt}+\gamma(s_1)= u_{lt}+ \gamma(A_1\ell(I))+ \gamma(A_2\ell(I)) +\gamma(s_1),
	\end{equation}
	where $v_{lt}$ and $u_{lt}$ are the left top vertices of $P$ and $Q$, respectively.
	Suppose that $h>0$ is maximal such that $x+ \gamma(t_1+h)\in W_1$. Similarly, this means that
	\begin{equation}\label{eq:e11}
		x+ \gamma(t_1+h)= u_{rb}+ \gamma(A_1\ell(I))+ \gamma(A_2\ell(I)) +\gamma(s_2),
	\end{equation}
	where $u_{rb}$ is the right bottom vertex of $Q$ and
	$-s_2\in \big( (A_2-3)\ell(I), (A_2+3)\ell(I)\big)$.
	By \eqref{eq:e10} and \eqref{eq:e11} we have
	\[
		\gamma(t_1+h)-\gamma(t_1)=u_{rb}- u_{lt}+\gamma(s_2)-\gamma(s_1).
	\]
	Componentwise this reads
	\begin{align}
		h                        & = \ell(I)-| s_2|+ |s_1|, \label{eq:e1006}                     \\
		(t_1+h)^\beta- t_1^\beta & = -\ell(I)^\beta - |s_2|^\beta +|s_1|^\beta. \label{eq:e1007}
	\end{align}
	Now, by the mean value theorem, there exist some $\xi_1$ and $\xi_2$ with
	\[
		\xi_1 \in \big((A_1-2)\ell(I), (A_1+2)\ell(I)\big)
		\qquad \textup{and} \qquad \xi_2 \in \big((A_2-3)\ell(I), (A_2+3)\ell(I)\big)
	\]
	such that
	\[
		(t_1+h)^\beta- t_1^\beta=\beta  h \xi_1^{\beta-1} \qquad \textup{and} \qquad
		|s_1|^\beta  - |s_2|^\beta = \beta (|s_1|-|s_2|) \xi_2^{\beta-1}.
	\]
	Hence, by \eqref{eq:e1007} and \eqref{eq:e1006} we have
	\begin{equation}\label{eq:e12}
		\beta h \xi_1^{\beta-1}= -\ell(I)^\beta +\beta (|s_1|-|s_2|) \xi_2^{\beta-1}
		= -\ell(I)^\beta +\beta (h-\ell(I)) \xi_2^{\beta-1},
	\end{equation}
	from which we get
	\[
		|I(x, W_1)|=h= \frac{\ell(I)^\beta+ \beta \ell(I) \xi_2^{\beta-1}}{\beta (\xi_2^{\beta-1}-\xi_1^{\beta-1})}
		= \frac{\ell(I)^\beta/{\xi_1^{\beta-1}}+ \beta \ell(I) (\xi_2/\xi_1)^{\beta-1}}{\beta \big((\xi_2/\xi_1)^{\beta-1}-1\big)}.
	\]
	Here we used that $x + \gamma(t) \in W_1$ for all $t \in [t_1, t_1+h]$ so that, indeed,
	$|I(x, W_1)|=h$ -- we will comment about this soon. First, however, we now get
	\begin{align*}
		\frac{|I(x, W_1)|}{\ell(I)}
		 & = \frac{(\ell(I)/\xi_1)^{\beta-1}+ \beta (\xi_2/\xi_1)^{\beta-1}}{\beta \big((\xi_2/\xi_1)^{\beta-1}-1\big)} \\
		 & \sim \frac{A_1^{1-\beta} + \beta C_{\beta}^{\beta-1}}{\beta(C_{\beta}^{\beta-1}-1)}.
	\end{align*}

	We now comment on the fact why $x + \gamma(t) \in W_1$ also for all $t \in (t_1, t_1+h)$.

	Fix such $t$.  We need to find $x'\in Q$ such that \eqref{eq:e9} holds.
	To simplify the notation we denote the lower left corner of $Q$ by $(u_1, u_2)$.
	If
	\[
		(A_1^\beta+A_2^\beta) \ell(I)^\beta + u_2-x_2 = t^\beta+ \big[ (A_1+A_2) \ell(I)+u_1-x_1-t \big]^\beta
	\]
	we are done, so suppose on the contrary, first, that
	$$
		(A_1^\beta+A_2^\beta) \ell(I)^\beta + u_2-x_2 < t^\beta+ \big[ (A_1+A_2) \ell(I)+u_1-x_1-t \big]^\beta.
	$$
	On the other hand, we have by monotonicity (using $t > t_1$) and the definition of $t_1$ that
	\begin{align*}
		t^\beta+ \big[ (A_1+A_2) \ell(I)+u_1-x_1-t \big]^\beta
		 & < t_1^\beta+ \big[ (A_1+A_2) \ell(I)+u_1-x_1-t_1 \big]^\beta     \\
		 & = (A_1^\beta+A_2^\beta) \ell(I)^\beta + (u_2+\ell(I)^\beta)-x_2.
	\end{align*}
	Therefore, there must exist $y_2\in (u_2,  u_2+\ell(I)^\beta)$ such that
	\[
		(A_1^\beta+A_2^\beta) \ell(I)^\beta + y_2-x_2= t^\beta+ \big[ (A_1+A_2) \ell(I)+u_1-x_1-t \big]^\beta
	\]
	showing that $x' = (u_1, y_2) \in Q$ works.
	Similarly, if
	\[
		t^\beta+ \big[ (A_1+A_2) \ell(I)+u_1-x_1-t \big]^\beta <  (A_1^\beta+A_2^\beta) \ell(I)^\beta + u_2-x_2,
	\]
	then we use again monotonicity to conclude that
	\begin{align*}
		(A_1^\beta+A_2^\beta) \ell(I)^\beta + u_2-x_2 & = (t_1+h)^\beta+ \big[ (A_1+A_2) \ell(I)+(u_1+\ell(I))-x_1-(t_1+h)\big]^\beta \\
		                                              & < t^\beta+ \big[ (A_1+A_2) \ell(I)+(u_1+\ell(I))-x_1-t \big]^\beta.
	\end{align*}
	Hence, there exists some $y_1\in (u_1, u_1+\ell(I))$ such that
	\[
		(A_1^\beta+A_2^\beta) \ell(I)^\beta + u_2-x_2=  t^\beta+ \big[ (A_1+A_2) \ell(I)+y_1-x_1-t \big]^\beta,
	\]
	showing that $x' = (y_1, u_2)\in Q$ works. This completes the proof.
\end{proof}

We continue studying the geometry of $W_1$.
Let $v=(v_1, v_2)$ be the lower left corner of $P$.
For  $1\le r <\infty$ define
\[
	P_r^{lt}:= \big(v_1, v_2+ \ell(I)^\beta\big)
	+ \Big[0, 2^{-r}\frac{\ell(I)}{\beta A_2^{\beta-1}}\Big]\times \big[- 2^{-r} \ell(I)^\beta, 0\big],
\]
\[
	P_r^{rb}:= \big(v_1+\ell(I), v_2\big)+
	\Big[-2^{-r}\frac{\ell(I)}{\beta A_2^{\beta-1}}, 0\Big]\times \big[0, 2^{-r} \ell(I)^\beta\big]
\]
and
$$
	P^{\cen} := P\setminus (P_1^{lt} \cup P_1^{rb}).
$$
We also define
\[
	\Delta P_r^{lt}
	= \Big\{(x_1, x_2)\in P_r^{lt}: x_1= v_1+ 2^{-r}\frac{\ell(I)}{\beta A_2^{\beta-1}}
	\,\, \text{or}\, \, x_2=v_2+ \ell(I)^\beta- 2^{-r} \ell(I)^\beta\Big\},
\]
and
\[
	\Delta P_r^{rb}
	= \Big\{(x_1, x_2)\in P_r^{rb}: x_1= v_1+\ell(I)- 2^{-r}\frac{\ell(I)}{\beta A_2^{\beta-1}}
	\,\, \text{or} \, \, x_2=v_2+ 2^{-r} \ell(I)^\beta\Big\}.
\]
To state our next result, we also introduce the following notation. For $z\in P$ and $y\in W_1$
we set
\[
	\phi(z, W_1):= \{z+ \gamma(s)\in W_1: -s\in \big((A_2-3)\ell(I), (A_2+3)\ell(I)\big)\},
\] and
\[
	I(y, P):=\{u \in \big((A_2-3)\ell(I), (A_2+3)\ell(I)\big): y+\gamma(u)\in P\}.
\]

\begin{lem}\label{lem:width1}
	Let $z\in P^{\cen} \subset P$ and $y \in \phi(z, W_1)$. Then, there holds
	\[
		|I(y, P)| \sim \frac{\ell(I)}{\beta A_2^{\beta-1}}.
	\]
	On the other hand, let $z\in \Delta P_r^{lt} \cup \Delta P_r^{rb}$, where $1 \le r < \infty$,
	and $y \in \phi(z, W_1)$. Then, there holds
	\[
		|I(y, P)|\sim 2^{-r}\frac{\ell(I)}{\beta A_2^{\beta-1}}.
	\]
	The implicit constants in the above estimates are independent of $y,z$ and $P$.
\end{lem}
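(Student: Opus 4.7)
The plan is to reduce the length $|I(y, P)|$ to that of an intersection of two intervals encoding the horizontal and vertical constraints imposed by $P$.

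Writing $y = (y_1, y_2)$ and $v = (v_1, v_2)$ for the lower-left corner of $P$, the condition $y + \gamma(u) \in P$ (for $u > 0$ in the relevant range) is equivalent to $u \in J_1(y) \cap J_2(y)$, where
\[
    J_1(y) := [v_1 - y_1, v_1 + \ell(I) - y_1], \qquad
    J_2(y) := \{u > 0 : u^\beta \in [v_2 - y_2, v_2 + \ell(I)^\beta - y_2]\}.
\]
Let $u_0 \in \big((A_2 - 3)\ell(I), (A_2 + 3)\ell(I)\big)$ be the unique parameter with $y + \gamma(u_0) = z$; this $u_0$ lies in both $J_1(y)$ and $J_2(y)$, and since both are intervals,
\[
    |I(y, P)| = \min(\alpha_1, \alpha_2) + \min(\beta_1, \beta_2),
\]
where $\alpha_i, \beta_i$ denote the extents of $J_i$ to the left and right of $u_0$. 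Directly $\alpha_1 = z_1 - v_1$ and $\beta_1 = v_1 + \ell(I) - z_1$, while the mean value theorem applied to $t \mapsto t^\beta$ near $u_0 \sim A_2 \ell(I)$ gives
\[
    \alpha_2 \sim \frac{z_2 - v_2}{\beta A_2^{\beta-1}\ell(I)^{\beta-1}},
    \qquad \beta_2 \sim \frac{v_2 + \ell(I)^\beta - z_2}{\beta A_2^{\beta-1}\ell(I)^{\beta-1}},
    \qquad \alpha_2 + \beta_2 \sim m := \frac{\ell(I)}{\beta A_2^{\beta-1}}.
\]
This yields the upper bound $|I(y, P)| \le \alpha_2 + \beta_2 \lesssim m$, and an analogous computation gives $|I(y, P)| \lesssim 2^{-r} m$ on $\Delta P_r^{lt} \cup \Delta P_r^{rb}$, since the defining equations of $\Delta P_r^{\bullet}$ force one of $\alpha_1, \alpha_2$ and one of $\beta_1, \beta_2$ to be $\le 2^{-r} m$.

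For the lower bound in the centered case $z \in P^{\cen}$, I split into subcases based on which of $\alpha_i$ and $\beta_i$ attains the minimum. If both minima are attained by index $2$, then $|I(y, P)| = \alpha_2 + \beta_2 \sim m$ directly. If both are attained by index $1$, summing gives $\ell(I) = \alpha_1 + \beta_1 \le \alpha_2 + \beta_2 \lesssim m$, contradicting $\beta A_2^{\beta-1} \gg 1$, which is guaranteed by the explicit choice $A_2 = C_\beta A_1$. The two remaining mixed cases correspond geometrically to the curve through $z$ entering/exiting $P$ via the top-left or bottom-right corners. In the top-left case ($\alpha_1 \le \alpha_2$ and $\beta_2 \le \beta_1$), one has $|I(y, P)| = \alpha_1 + \beta_2$; the assumption $z \notin P_1^{lt}$ forces either $z_1 - v_1 > m/2$ (so $\alpha_1 \gtrsim m$) or $v_2 + \ell(I)^\beta - z_2 > \ell(I)^\beta/2$ (so $\beta_2 \gtrsim m$), and in either case $|I(y, P)| \gtrsim m$. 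The bottom-right case uses $z \notin P_1^{rb}$ symmetrically.

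For $z \in \Delta P_r^{lt}$, exactly one of $z_1 - v_1 = 2^{-r}m$ or $v_2 + \ell(I)^\beta - z_2 = 2^{-r}\ell(I)^\beta$ holds, with the other coordinate constrained to the corresponding $P_r^{lt}$ range. In the first subcase, $\alpha_1 = 2^{-r}m$ while $z_2 - v_2 \ge (1 - 2^{-r})\ell(I)^\beta \ge \ell(I)^\beta/2$ forces $\alpha_2 \gtrsim m \ge \alpha_1$; thus the left contribution is exactly $\alpha_1 = 2^{-r}m$, giving $|I(y, P)| \gtrsim 2^{-r}m$. In the second subcase, $\beta_2 \sim 2^{-r}m$ and $\beta_1 \sim \ell(I) \gg m$ yield a right contribution of size $\sim 2^{-r}m$, giving the same bound. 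The case $\Delta P_r^{rb}$ is symmetric. The main subtlety throughout is verifying that the precise geometric proportions of the carve-outs $P_r^{lt}, P_r^{rb}$, namely width $2^{-r}\ell(I)/(\beta A_2^{\beta-1})$ by height $2^{-r}\ell(I)^\beta$, exactly isolate the corner configurations where the generic lower bound of order $m$ degrades to $2^{-r} m$.
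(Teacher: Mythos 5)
Your proof is correct and takes essentially the same route as the paper: both reduce $|I(y,P)|$ to the horizontal and vertical constraints imposed by the sides of $P$ and convert vertical distances into parameter lengths via the mean value theorem for $t\mapsto t^{\beta}$ at scale $A_2\ell(I)$, with the same bookkeeping of which constraint binds near the two corner carve-outs. The one detail to add is the paper's one-line observation (via Lemma \ref{lem:11}) that any $u>0$ with $y+\gamma(u)\in P$ and $y\in W_1$ automatically lies in the window $\big((A_2-3)\ell(I),(A_2+3)\ell(I)\big)$; this is what justifies your identity $|I(y,P)|=\min(\alpha_1,\alpha_2)+\min(\beta_1,\beta_2)$, since without it your lower bounds control $|J_1\cap J_2|$ rather than $|I(y,P)|$ (also, in the $\Delta P_r^{lt}$ subcase the left contribution is only $\alpha_1$ up to a constant, not "exactly", which is harmless).
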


To aid in understanding what is stated in Lemma \ref{lem:width1}, consider the following Figure \ref{fig:blowup}.
\begin{figure}[h]
	\centering
	\includegraphics[scale=1]{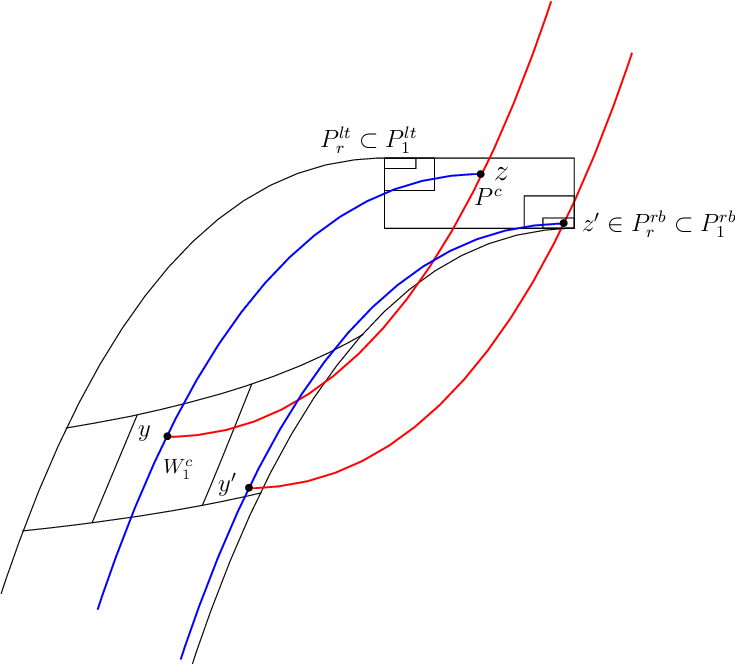}
	\caption{Explanation of Lemma \ref{lem:width1}. The function $y\mapsto |I(y,P)|$ is large when $y\in W_1^c$ but decays as $y'$ approaches the vertical part of the boundary of $W_1$.}\label{fig:blowup}
\end{figure}

\begin{proof}[Proof of Lemma \ref{lem:width1}]
	We first consider the case $z\in \Delta P_r^{lt}$, that is, we have $z\in P_r^{lt}$ and either
	\[
		z_1= v_1+ 2^{-r}\frac{\ell(I)}{\beta A_2^{\beta-1}}\qquad
		\text{or}\qquad z_2=v_2+ \ell(I)^\beta- 2^{-r} \ell(I)^\beta.
	\]
	Let then $y\in \phi(z, W_1)$. This means that
	$$
		y= z+\gamma(s)\in W_1  \qquad
		\text{for some}\qquad -s\in \big((A_2-3)\ell(I), (A_2+3)\ell(I)\big).
	$$
	Then, by definition,
	$-s \in I(y, P)$. Suppose first that $z_1= v_1+ 2^{-r}\frac{\ell(I)}{\beta A_2^{\beta-1}}$. Notice
	that we can write
	\[
		y+ \gamma (-s -h) = z+\gamma(s)+ \gamma (-s -h).
	\]
	Fix $0<h< 2^{-r}\frac{\ell(I)}{\beta(A_2+3)^{\beta-1} }$  and we will show that $y+ \gamma (-s -h)\in P.$ First, there holds that (the
	precise upper bound for $h$ is not yet used here)
	\[
		y_1+ (-s -h)- v_1=z_1-h-v_1=2^{-r} \frac{\ell(I)}{\beta A_2^{\beta-1}}-h\in (0, \ell(I))
	\]
	and
	\begin{align*}
		y_2+(-s-h)^\beta-v_2 & = z_2- v_2-|s|^\beta+(|s|-h)^\beta                           \\
		                     & \le \ell(I)^\beta -|s|^\beta+(|s|-h)^\beta\le \ell(I)^\beta.
	\end{align*}
	To obtain a lower bound for the last line, notice that by
	the mean value theorem and the upper bound for $h$ we have
	\[
		|s|^\beta-(|s|-h)^\beta<\beta |s|^{\beta-1} h<2^{-r}\ell(I)^\beta
	\]
	so that (using the location of $z_2$) we have
	\begin{align*}
		z_2- v_2-|s|^\beta+(|s|-h)^\beta & \ge (1-2^{-r})\ell(I)^\beta-(|s|^\beta-(|s|-h)^\beta) \\
		                                 & >  (1-2^{1-r})\ell(I)^\beta\ge 0.
	\end{align*}
	The above proves that $y+ \gamma (-s -h)\in P$ for all such $h$.
	Notice also that $-s-h$ must belong to the range $((A_2-3)\ell(I), (A_2+3)\ell(I))$
	-- indeed, $y + \gamma(-s-h) = w$ for some $w \in P$ and so
	$w + \gamma(s+h) = y \in W_1$ implying the claim by Lemma \ref{lem:11}.
	Thus, we have
	\[
		|I(y, P)| \ge 2^{-r}\frac{\ell(I)}{\beta(A_2+3)^{\beta-1} }.
	\]

	We discuss the corresponding upper bound next.
	Suppose $-s' \in I(y, P)$.
	Then
	\[
		y+ \gamma (-s') = z+\gamma(s)+ \gamma (-s') \in P,
	\]
	and so
	\begin{align}
		(z_1-v_1)+s-s'                 & =y_1-s'-v_1\ge 0\label{eq:eq1012},                       \\
		(z_2-v_2)-|s|^\beta+|s'|^\beta & = y_2+|s'|^\beta-v_2\le \ell(I)^\beta. \label{eq:eq1013}
	\end{align}
	By \eqref{eq:eq1012} we have
	\[
		s-s'\ge - (z_1-v_1)=- 2^{-r} \frac{\ell(I)}{\beta A_2^{\beta-1}},
	\]
	and by \eqref{eq:eq1013} we have
	\[
		-|s|^\beta+|s'|^\beta\le \ell(I)^\beta- (z_2-v_2)\le 2^{-r}\ell(I)^\beta.
	\]
	Using the mean value theorem there exists some $\xi\in ((A_2-3)\ell(I), (A_2+3)\ell(I))$ such that
	$$
		|s'|^{\beta} - |s|^{\beta} = \beta \xi^{\beta-1}(|s'| - |s|)
	$$
	and so
	\[
		s-s'=|s'|-|s|=\frac{-|s|^\beta+|s'|^\beta}{\beta \xi^{\beta-1}}<2^{-r} \frac{\ell(I)}{\beta (A_2-3)^{\beta-1}}.
	\]
	Therefore,
	\[
		|I(y, P)| \le \frac{2^{-r+1}}{\beta(A_2-3)^{\beta-1} }\ell(I).
	\]
	This completes the proof of the case $z\in \Delta P_r^{lt}$ with
	$z_1= v_1+ 2^{-r}\frac{\ell(I)}{\beta A_2^{\beta-1}}$.

	The case $z_2=v_2+ \ell(I)^\beta- 2^{-r} \ell(I)^\beta$ is similar.
	The argument for $z\in P^{\cen}$ is essentially the same, too
	(if $z_2-v_2> \ell(I)^\beta/2$ this will be similar as the proof presented above,
	and if $z_2-v_2\le \ell(I)^\beta/2$ it will be similar as the case
	$z\in \Delta P_r^{lt}$ with $z_2=v_2+ \ell(I)^\beta- 2^{-r} \ell(I)^\beta$).
	Finally, the case $\Delta P_r^{rb}$ follows with similar arguments as well.
\end{proof}

\subsection*{Auxiliary functions}
We next introduce certain auxiliary functions
that will be important in the upcoming weak factorization argument. Define
\[
	g_P:=1_P,\qquad g_Q:=1_Q.
\]
We will next define $g_{W_1}$, which is more complicated. Let
\begin{align*}
	W_1^{\cen} :=\{y\in W_1: \exists z\in P^{\cen} \text{\,\,such that\,\,} y\in \phi(z, W_1)\}
\end{align*}
and let $\eta\ge 0$ be the smallest constant so that
\[
	W_1 = \bigcup\limits_{r\in [-\eta, \infty)}W_1(r), \quad  W_1(r)
	:= \Big\{y\in W_1: |I(y, P)|=2^{-r} \frac{\ell(I)}{\beta A_2^{\beta-1}}\Big\}.
\]
Note that the existence of such an $\eta$ is guaranteed by Lemma \ref{lem:width1}. Then we define
\[
	\varphi(y, M) := \sum_{-\eta\le r<M}1_{W_1(r)}(y)
	+ \sum_{ r\ge M}1_{W_1(r)}(y) 2^{-(r-M)},\qquad y \in W_1,\quad M>0,
\]
where $M$ is sufficiently large to be fixed during the proof of the next Lemma \ref{lem:gw1}.
\begin{lem}\label{lem:gw1}
	There exists an absolute constant $M>1$ such that
	$$
		g_{W_1}(y) := \varphi(y, M)
	$$
	satisfies the following properties:
	\begin{align}
		 & 1_{W_1^{\cen}}g_{W_1}= 1_{W_1^{\cen}}\label{eq:e18},                                                      \\
		 & \qquad g_{W_1}(y)\sim_M |I(y,P)|\frac {\beta A_2^{\beta-1}}{\ell(I)} \label{eq:e19},                      \\
		 & \qquad\qquad\lim_{A_1\to \infty} \frac{g_{W_1}(z+\gamma (s))}{g_{W_1}(z+\gamma (s'))} =1 \label{eq:e20z},
	\end{align}
	where in \eqref{eq:e19} the implicit constants do not depend on $y \in W_1$ and $Q$,
	and in \eqref{eq:e20z} the limit is uniform on $Q\in \mathcal R_\gamma$,
	$z \in P\setminus  \{v_{lt}, v_{rb}\}$ and
	\begin{align*}
		s, s' & \in I(z, W_1)                                                                                 \\
		      & := \{s \colon -s \in ((A_2-3)\ell(I), (A_2+3)\ell(I)) \textup{ and } z + \gamma(s) \in W_1\}.
	\end{align*}
\end{lem}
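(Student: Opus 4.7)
The plan is to pin down $M$ so that \eqref{eq:e18} holds almost by construction, verify \eqref{eq:e19} by direct inspection of $\varphi$, and then spend most of the effort on \eqref{eq:e20z}, which is the substantive claim.

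First, by Lemma~\ref{lem:width1}, for $y \in \phi(z, W_1)$ with $z \in P^{\cen}$ the quantity $|I(y,P)|$ is comparable to $\ell(I)/(\beta A_2^{\beta-1})$ with absolute implicit constants. In particular, there is an absolute $C_0$ such that $y \in W_1(r)$ forces $|r| \le C_0$ whenever the corresponding $z$ can be taken in $P^{\cen}$, and likewise $\eta \le C_0$. I fix $M$ to be any constant strictly greater than $C_0$. Then every $y \in W_1^{\cen}$ lies in some $W_1(r)$ with $r < M$, giving $g_{W_1}(y) = \varphi(y, M) = 1$, which is \eqref{eq:e18}. Property \eqref{eq:e19} follows by inspection: when $-\eta \le r < M$ we have $g_{W_1}(y) = 1$ while $|I(y,P)|\beta A_2^{\beta-1}/\ell(I) = 2^{-r} \in [2^{-M}, 2^\eta]$, and for $r \ge M$ the two quantities agree up to the absolute factor $2^{-M}$.

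The substantive claim is \eqref{eq:e20z}. Writing $r(y)$ for the unique real index with $y \in W_1(r(y))$, a short case analysis on whether $r(y), r(y')$ lie below or above $M$ reduces \eqref{eq:e20z} to the uniform limit
\[
\lim_{A_1 \to \infty} \frac{|I(z + \gamma(s), P)|}{|I(z + \gamma(s'), P)|} = 1,
\]
uniformly in $Q \in \mathcal R_\gamma$, $z \in P \setminus \{v_{lt}, v_{rb}\}$, and $s, s' \in I(z, W_1)$. To compute $|I(y,P)|$ for $y = z + \gamma(s)$ I would set $u_0 := -s > 0$ and parameterize admissible shifts as $u = u_0 + \delta$. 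The condition $y + \gamma(u) \in P$ becomes $\delta \in [-a, \ell(I) - a]$ together with $(u_0 + \delta)^\beta - u_0^\beta \in [-b, \ell(I)^\beta - b]$, where $a = z_1 - v_1$ and $b = z_2 - v_2$. Applying the mean value theorem as in the proof of Lemma~\ref{lem:width1} rewrites the vertical constraint in the form $\delta \in [-b/(\beta\xi^{\beta-1}), (\ell(I)^\beta - b)/(\beta(\xi')^{\beta-1})]$ with $\xi, \xi' \in ((A_2-3)\ell(I), (A_2+3)\ell(I))$. Intersecting with the horizontal constraint,
\[
|I(y,P)| = \min\Bigl(\ell(I) - a, \frac{\ell(I)^\beta - b}{\beta (\xi')^{\beta-1}}\Bigr) + \min\Bigl(a, \frac{b}{\beta \xi^{\beta-1}}\Bigr),
\]
which is strictly positive precisely because $z \notin \{v_{lt}, v_{rb}\}$.

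The main obstacle is then to see that this expression is nearly invariant under $u_0 \leftrightarrow u_0'$ as $A_1 \to \infty$. The ratios $\xi/\xi'$ lie in $[(A_2-3)/(A_2+3), (A_2+3)/(A_2-3)]$, which tightens to $\{1\}$. A short check in the three regimes (both pieces of each $\min$ given by the $\xi$-dependent branch; both given by the constant branch; or one each) shows that every individual $\min$ term varies by a factor in $[(\xi/\xi')^{\beta-1}, (\xi'/\xi)^{\beta-1}]$, uniformly in the parameters $a, b$ and in the constants $c, e$ appearing in each min. Since each term is non-negative and their sum is strictly positive on $P \setminus \{v_{lt}, v_{rb}\}$, the ratio of sums lies in the same tight interval and therefore tends to $1$ uniformly. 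Combined with the case analysis on the positions of $r(y), r(y')$ relative to $M$, this delivers \eqref{eq:e20z} and completes the lemma.
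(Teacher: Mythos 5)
Your proposal is correct and follows the paper's overall architecture: the same choice of $M$ via Lemma \ref{lem:width1} for \eqref{eq:e18}, the same ``immediate from the definition'' verification of \eqref{eq:e19} (your remark that $\eta$ is bounded by an absolute constant is exactly what makes the comparability absolute; the factor is $2^{M}$ rather than $2^{-M}$, a harmless slip), and the same reduction of \eqref{eq:e20z} to the uniform limit of the ratio $|I(z+\gamma(s),P)|/|I(z+\gamma(s'),P)|$. Where you genuinely diverge is in the two remaining steps. For the reduction itself, the paper handles the mixed case $r(s)\ge M\ge r(s')$ by proving continuity of $s\mapsto |I(z+\gamma(s),P)|$ from the explicit $\max/\min$ formula and invoking the intermediate value theorem to produce an $s_0$ with $r(s_0)=M$; you only assert ``a short case analysis,'' and you should make this explicit, since in the mixed case the $g$-ratio is $2^{M-r(s)}$, not literally an $|I|$-ratio — the cleanest fix is the squeeze $\,|I(z+\gamma(s),P)|/|I(z+\gamma(s'),P)| = 2^{r(s')-r(s)}\le 2^{M-r(s)}\le 1$, which avoids the paper's continuity argument altogether. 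For the core limit (the content of the paper's Lemma \ref{lem:inP}), the paper runs a four-case analysis on which branches of $a(s'),b(s')$ are active, using the mean value theorem and the elementary inequality \eqref{eq:e1025} in each case; you instead linearize first, writing $|I(y,P)|=\min\bigl(\ell(I)-a,\tfrac{\ell(I)^\beta-b}{\beta(\xi')^{\beta-1}}\bigr)+\min\bigl(a,\tfrac{b}{\beta\xi^{\beta-1}}\bigr)$, and then use the scaling observation $\min(c,\lambda e)\ge\lambda\min(c,e)$ for $\lambda\le1$ to get a two-sided bound by $((A_2-3)/(A_2+3))^{\pm(\beta-1)}$ uniformly in $a,b,z,Q$. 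This is equivalent in content to the paper's computation but more streamlined: one formula and one monotonicity trick replace the four cases, and positivity of the sum off $\{v_{lt},v_{rb}\}$ is transparent. Two small points of care: the mean value points $\xi,\xi'$ attached to the endpoints $\delta_\pm$ may fall marginally outside $((A_2-3)\ell(I),(A_2+3)\ell(I))$ (since $u_0+\delta_\pm$ need not lie in that window), but $|\delta_\pm|\lesssim\ell(I)/A_2^{\beta-1}$, so an enlargement to, say, $((A_2-4)\ell(I),(A_2+4)\ell(I))$ costs nothing in the limit; and one should note (as the paper does via Lemma \ref{lem:11}) that every $u>0$ with $y+\gamma(u)\in P$, $y\in W_1$, automatically lies in the $A_2$-window, so the truncation in the definition of $I(y,P)$ is inactive and your length formula is exact.
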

\begin{proof}
	We first prove \eqref{eq:e18}. For $y\in W_1^{\cen}$ by Lemma \ref{lem:width1} we have
	\[
		|I(y, P)|\sim \frac{\ell(I)}{\beta A_2^{\beta-1}},
	\]
	and therefore if $M$ is such that
	\[
		|I(y, P)| > 2^{-M} \frac{\ell(I)}{\beta A_2^{\beta-1}},
	\]
	then $y\not\in W_1(r)$ for $r\geq M,$ hence $g_{W_1}=1$ on $W_1^{\cen}$ and \eqref{eq:e18} is proved.
	The claim \eqref{eq:e19} is immediate from definition.

	It remains to verify	 \eqref{eq:e20z}. Fix for the remaining argument a point
	$z\in P\setminus \{v_{lt}, v_{rb}\}$.
	Let
	$-s,u\in \big((A_2-3)\ell(I), (A_2+3)\ell(I)\big)$ satisfy $z+\gamma(s)+\gamma(u)\in P$, i.e. with $(v_1, v_2)$ being the lower left corner of $P$ there holds that
	\begin{align}\label{eq:noExit}
		0\le z_1+s+u-v_1\le \ell(I),\quad 0\le z_2-|s|^\beta+u^\beta -v_2\le \ell(I)^\beta.
	\end{align}
	Now let $a(s)$ stand for the minimal $u_*$ such that both of the above bounds \eqref{eq:noExit} hold, and  let $b(s)$ stand for the maximal $u^*$ such that both hold; then it is clear that
	\begin{align}\label{eq:cont}
		|I(z+\gamma(s), P)|=b(s)-a(s) = u^*-u_*.
	\end{align}
	Explicitly, it is clear from \eqref{eq:noExit} that
	\begin{align*}
		a(s) & := \max\{ v_1-z_1-s, \,(v_2-z_2+|s|^\beta)^{\frac 1\beta}\},                       \\
		b(s) & := \min \{ v_1+\ell(I)-z_1-s, (v_2+\ell(I)^\beta -z_2+|s|^\beta)^{\frac 1\beta}\}.
	\end{align*}
	Now $|I(z+\gamma(s), P)|$ is positive (by $z\in P\setminus \{v_{lt}, v_{rb}\}$) and from \eqref{eq:cont} it is clearly continuous, and then also
	\[
		r(s):=-\log_2 \Big(|I(z+\gamma(s), P)|\frac{\beta A_2^{\beta-1}}{\ell(I) }\Big)
	\]
	is continuous.
	By definition $z+\gamma(s)\in W_1(r(s)),$ i.e. if $r(s) \le M,$ then
	$$
		g_{W_1}(z+\gamma(s)) = 1
	$$
	and if $r(s) \ge M,$ then
	$$
		g_{W_1}(z+\gamma(s)) = 2^{-(r(s)-M)} = 2^M \frac{\beta A_2^{\beta-1}}{\ell(I)} |I(z+\gamma(s), P)|.
	$$
	So if $r(s), r(s') \le M$, then
	$$
		\frac{g_{W_1}(z+\gamma (s))}{g_{W_1}(z+\gamma (s'))} = \frac{1}{1} = 1,
	$$
	and if $r(s), r(s') \ge M$, then
	$$
		\frac{g_{W_1}(z+\gamma (s))}{g_{W_1}(z+\gamma (s'))} = \frac{|I(z+\gamma(s), P)|}{|I(z+\gamma(s'), P)|}.
	$$
	Suppose that $r(s) \ge M$ and $r(s') \le M$. By the continuity of $r(s)$ we find $s_0$ between $s$ and $s'$
	such that $r(s_0) = M$. We then have
	$$
		\frac{g_{W_1}(z+\gamma (s))}{g_{W_1}(z+\gamma (s'))}
		= \frac{g_{W_1}(z+\gamma (s))}{1} = \frac{g_{W_1}(z+\gamma (s))}{g_{W_1}(z+\gamma (s_0))}
		= \frac{|I(z+\gamma(s), P)|}{|I(z+\gamma(s_0), P)|}.
	$$
	Similarly, if $r(s) \le M$ and $r(s') \ge M$, we again find such an $s_0,$ and again
	$$
		\frac{g_{W_1}(z+\gamma (s))}{g_{W_1}(z+\gamma (s'))}
		= \frac{1}{g_{W_1}(z+\gamma (s'))} = \frac{g_{W_1}(z+\gamma(s_0))}{g_{W_1}(z+\gamma (s'))}
		= \frac{|I(z+\gamma(s_0), P)|}{|I(z+\gamma(s'), P)|}.
	$$
	So if the limit \eqref{eq:e20z} exists and is uniform, then
	\[
		\lim_{A_1\to \infty} \frac{g_{W_1}(z+\gamma (s))}{g_{W_1}(z+\gamma (s'))} =   \lim_{A_1\to \infty} \frac{|I(z+\gamma (s), P)|}{|I(z+\gamma (s'), P)|}.
	\]
	In the following Lemma \ref{lem:inP} we show that this is indeed so.
\end{proof}
\begin{lem}\label{lem:inP} There holds that
	\begin{align}\label{eq:e20x}
		\lim_{A_1\to \infty} \frac{|I(z+\gamma (s'), P)|}{|I(z+\gamma (s), P)|}=1.
	\end{align}
	In particular, to be recorded for later use in the proof of Proposition \ref{prop:awf}, there holds that
	\begin{align}\label{eq:e20}
		\lim_{A_1\to \infty} \frac{g_{W_1}(z+\gamma (s))}{g_{W_1}(z+\gamma (s'))}\cdot \frac{|I(z+\gamma (s'), P)|}{|I(z+\gamma (s), P)|}=1.
	\end{align}
	Crucially, in \eqref{eq:e20x} and \eqref{eq:e20} the implicit constants do not depend on $Q$
	and the limits are uniform on $Q\in \mathcal R_\gamma$,
	$z \in P\setminus  \{v_{lt}, v_{rb}\}$ and
	$
		s, s' \in I(z, W_1).
	$
\end{lem}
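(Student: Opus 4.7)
The goal is to prove \eqref{eq:e20x}; the additional claim \eqref{eq:e20} then follows immediately from the chain of identities already produced in the proof of Lemma \ref{lem:gw1}. The plan is to write $|I(z+\gamma(s),P)|=b(s)-a(s)$ using the explicit formulas from that proof and compare the values at $s$ and $s'$ via Taylor expansion in the large parameter $|s|\sim A_2\ell(I)$.

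Writing $p:=z_1-v_1\in[0,\ell(I)]$, $q:=z_2-v_2\in[0,\ell(I)^{\beta}]$ and $\sigma(s):=1/(\beta|s|^{\beta-1})$, I would substitute the expansion
\[
(|s|^{\beta}+D)^{1/\beta}=|s|+\sigma(s)D+O(\sigma(s)D^{2}/|s|^{\beta}),\qquad |D|\le\ell(I)^{\beta},
\]
into the formulas
\begin{align*}
a(s)&=\max\{|s|-p,\,(|s|^{\beta}-q)^{1/\beta}\},\\
b(s)&=\min\{|s|+\ell(I)-p,\,(|s|^{\beta}+\ell(I)^{\beta}-q)^{1/\beta}\}.
\end{align*}
A direct case split on which of the max/min is attained reduces $b(s)-a(s)$ to one of three candidate formulas up to relative error $O(1/A_2^{\beta})$: the \emph{bulk} value $\sigma(s)\ell(I)^{\beta}$, the \emph{near-$v_{lt}$} value $p+\sigma(s)(\ell(I)^{\beta}-q)$, and the \emph{near-$v_{rb}$} value $(\ell(I)-p)+\sigma(s)q$. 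The fourth combination would require $p$ to be both $\lesssim\sigma(s)\ell(I)^{\beta}$ and $\gtrsim\ell(I)-\sigma(s)\ell(I)^{\beta}$, which is impossible for large $A_1$. Transitions between cases are harmless: direct substitution shows the three formulas agree on the interfaces, in accordance with the continuity of $a$ and $b$.

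Each of the three candidates has the structural form $\alpha+\gamma\sigma(s)$ with $\alpha,\gamma\ge 0$, and the exclusion $z\in P\setminus\{v_{lt},v_{rb}\}$ precisely rules out the case $\alpha=\gamma=0$ in the corner formulas. Since $|s|,|s'|\in((A_2-3)\ell(I),(A_2+3)\ell(I))$ yields $\sigma(s')/\sigma(s)=(|s|/|s'|)^{\beta-1}=1+O(1/A_2)$, one obtains
\[
\frac{\alpha+\gamma\sigma(s')}{\alpha+\gamma\sigma(s)}=1+O(1/A_2)
\]
uniformly in $\alpha,\gamma\ge 0$ not both zero, and combined with the relative Taylor error $O(1/A_2^{\beta})$ this delivers \eqref{eq:e20x}. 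The main obstacle is precisely this uniformity as $z$ approaches (but is not equal to) $v_{lt}$ or $v_{rb}$, where $b(s)-a(s)$ can be arbitrarily small; the nonnegative-coefficient structure $\alpha+\gamma\sigma(s)$ is what forces the numerator and the denominator of the ratio to degenerate in tandem, so no uniformity is lost near the corners.
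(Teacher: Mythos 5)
Your argument is correct and essentially the paper's own: the same explicit max/min formulas for $a(s),b(s)$, the same split according to which constraints are active (bulk, near-$v_{lt}$, near-$v_{rb}$), mean-value/Taylor comparison of the $\beta$-th roots, and the nonnegative-coefficient ratio bound $\frac{\alpha+\gamma\sigma(s')}{\alpha+\gamma\sigma(s)}=1+O(1/A_2)$, which is precisely the paper's inequality \eqref{eq:e1025}, with \eqref{eq:e20} then read off from the identities in the proof of Lemma \ref{lem:gw1} exactly as you say. The only organizational difference is that the paper avoids your interface discussion altogether by lower-bounding $b(s')-a(s')$ via the candidates active at $s'$ and upper-bounding $b(s)-a(s)$ by the \emph{same} candidate pair (legitimate since $b$ is a min and $a$ a max), then symmetrizing in $s,s'$ (and it keeps the ``both horizontal'' case as a trivial fourth case rather than excluding it); your transition argument does work, but to be complete it needs the extra observation that $s,s'$ falling into different cases forces $z$ into a thin zone where the two competing formulas differ only by a $1+O(1/A_2)$ factor, not merely that they agree exactly on the interface.
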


\begin{proof}
	Recall that $|I(z+\gamma(s), P)|=b(s)-a(s),$ where
	\begin{align*}
		a(s) & =\max\{ v_1-z_1-s, \,(v_2-z_2+|s|^\beta)^{\frac 1\beta}\},                        \\
		b(s) & = \min \{ v_1+\ell(I)-z_1-s, (v_2+\ell(I)^\beta -z_2+|s|^\beta)^{\frac 1\beta}\}.
	\end{align*}
	We have four cases to consider.

		{\bf Case 1}. $b(s')= (v_2+\ell(I)^\beta -z_2+|s'|^\beta)^{\frac 1\beta}$, $a(s')=v_1-z_1-s'$. In this case we have
	\begin{align*}
		b(s')-a(s') & = (v_2+\ell(I)^\beta -z_2+|s'|^\beta)^{\frac 1\beta}- (v_1-z_1-s')                                                \\
		            & = (z_1-v_1)+  (v_2+\ell(I)^\beta -z_2+|s'|^\beta)^{\frac 1\beta}- (|s'|^\beta)^{\frac1\beta}                      \\
		            & \ge  (z_1-v_1)+ \frac {v_2+\ell(I)^\beta -z_2}{\beta (v_2+\ell(I)^\beta -z_2+|s'|^\beta)^{1-\frac 1\beta}}        \\
		            & \ge  (z_1-v_1)+ \frac {v_2+\ell(I)^\beta -z_2}{\beta\big((A_2+3)^\beta+1\big)^{1-\frac 1\beta}\ell(I)^{\beta-1}}.
	\end{align*}
	Similarly, we have
	\begin{align*}
		b(s)-a(s) & \le (v_2+\ell(I)^\beta -z_2+|s|^\beta)^{\frac 1\beta}- (v_1-z_1-s)                      \\
		          & \le (z_1-v_1)+ \frac {v_2+\ell(I)^\beta -z_2}{\beta\big((A_2-3)\ell(I)\big)^{\beta-1}}.
	\end{align*}
	We record the simple fact that for $0< x< y$ and $a, b\ge 0$ with $\max\{a,b\}>0$ we have
	\begin{equation}\label{eq:e1025}
		\frac{a+bx}{a+by}\ge \frac{axy^{-1}+bx}{a+by}
		= \frac{x(ay^{-1}+b)}{y(ay^{-1}+b)}
		= \frac xy.
	\end{equation}
	Using this we get
	\begin{align*}
		\frac{|I(z+\gamma(s'), P)|}{|I(z+\gamma(s), P)|}
		\ge \frac{(A_2-3)^{\beta-1}}{\big((A_2+3)^\beta+1\big)^{1-\frac 1\beta}}.
	\end{align*}
	We are content with this for the time being (we calculate the limit after all of the cases), and move on now.

		{\bf Case 2}. $b(s')= (v_2+\ell(I)^\beta -z_2+|s'|^\beta)^{\frac 1\beta}$, $a(s')=(v_2-z_2+|s'|^\beta)^{\frac 1\beta}$. In this case we have
	\begin{align*}
		b(s')-a(s') & = (v_2+\ell(I)^\beta -z_2+|s'|^\beta)^{\frac 1\beta}-(v_2-z_2+|s'|^\beta)^{\frac 1\beta}                                                                \\
		            & \ge  \frac {\ell(I)^\beta}{\beta (v_2+\ell(I)^\beta  -z_2+|s'|^\beta)^{1-\frac 1\beta} }\ge \frac {\ell(I)}{\beta ((A_2+3)^\beta+1)^{1-\frac 1\beta} }.
	\end{align*}
	Similarly, we have
	\begin{align*}
		b(s)-a(s) & \le (v_2+\ell(I)^\beta -z_2+|s|^\beta)^{\frac 1\beta}-(v_2-z_2+|s|^\beta)^{\frac 1\beta} \\
		          & \le \frac {\ell(I)}{\beta ((A_2-3)^\beta-1)^{1-\frac 1\beta} }.
	\end{align*}
	Hence, in this case we obtain
	\begin{align*}
		\frac{|I(z+\gamma(s'), P)|}{|I(z+\gamma(s), P)|}\ge \frac{((A_2-3)^\beta-1)^{1-\frac 1\beta}}{\big((A_2+3)^\beta+1\big)^{1-\frac 1\beta}}.
	\end{align*}

	{\bf Case 3}. $b(s')=v_1+\ell(I)-z_1-s'$, $a(s')=(v_2-z_2+|s'|^\beta)^{\frac 1\beta}$. Notice that now
	\begin{align*}
		b(s')-a(s') & = v_1+\ell(I)-z_1-s'-(v_2-z_2+|s'|^\beta)^{\frac 1\beta}                                   \\
		            & = v_1+\ell(I)-z_1+\big( |s'|^\beta\big)^{\frac 1\beta}-(v_2-z_2+|s'|^\beta)^{\frac 1\beta} \\
		            & \ge v_1+\ell(I)-z_1+\frac{z_2-v_2}{\beta \big( |s'|^{\beta}\big)^{1-\frac 1\beta} }        \\
		            & \ge v_1+\ell(I)-z_1+\frac{z_2-v_2}{\beta \big( (A_2+3)\ell(I)\big)^{\beta-1} }.
	\end{align*}
	Likewise, we have
	\begin{align*}
		b(s)-a(s) & \le  v_1+\ell(I)-z_1+\big( |s|^\beta\big)^{\frac 1\beta}-(v_2-z_2+|s|^\beta)^{\frac 1\beta}              \\
		          & \le v_1+\ell(I)-z_1+\frac{z_2-v_2}{\beta \big( (A_2-3)^\beta-1\big)^{1-\frac 1\beta}\ell(I)^{\beta-1} }.
	\end{align*}
	Then, by using \eqref{eq:e1025} we get
	\begin{align*}
		\frac{|I(z+\gamma(s'), P)|}{|I(z+\gamma(s), P)|}\ge \frac{((A_2-3)^\beta-1)^{1-\frac 1\beta}}{ (A_2+3)^{\beta-1}}.
	\end{align*}

	{\bf Case 4}. $b(s')=v_1+\ell(I)-z_1-s'$, $a(s')=v_1-z_1-s'$. Now trivially
	$b(s')-a(s')=\ell(I)$ and also
	\[
		b(s)-a(s)\le v_1+\ell(I)-z_1-s-(v_1-z_1-s)=\ell(I).
	\]
	Thus, we get
	\[
		\frac{|I(z+\gamma(s'), P)|}{|I(z+\gamma(s), P)|}\ge 1.
	\]

	In summary, for all cases we have
	\begin{align*}
		\frac{|I(z+\gamma(s'), P)|}{|I(z+\gamma(s), P)|}
		\ge \frac{((A_2-3)^\beta-1)^{1-\frac 1\beta}}{ \big((A_2+3)^{\beta}+1\big)^{1-\frac 1\beta}}.
	\end{align*}
	The assumptions on $s, s'$ were symmetric, so switching the roles of $s$ and $s'$ gives that we also have
	\[
		\frac{|I(z+\gamma(s), P)|}{|I(z+\gamma(s'), P)|}
		\ge \frac{((A_2-3)^\beta-1)^{1-\frac 1\beta}}{ \big((A_2+3)^{\beta}+1\big)^{1-\frac 1\beta}}.
	\]
	Therefore,
	\begin{align*}
		\frac{((A_2+3)^\beta+1)^{1-\frac 1\beta}}{ \big((A_2-3)^{\beta}-1\big)^{1-\frac 1\beta}}	 \ge \frac{|I(z+\gamma(s'), P)|}{|I(z+\gamma(s), P)|}
		\ge \frac{((A_2-3)^\beta-1)^{1-\frac 1\beta}}{ \big((A_2+3)^{\beta}+1\big)^{1-\frac 1\beta}},
	\end{align*}
	and we conclude \eqref{eq:e20x}.
\end{proof}

\section{Approximate weak factorization}\label{sect:AWF}
We begin with the following proposition, which is the main step of our approximate weak factorization argument.
\begin{prop}\label{prop:awf}
	Let $f\in L^{\infty}$ be supported on $Q=I\times J\in \mathcal R_\gamma $. Then for all $A_1$ large enough
	(independently of $Q$), the function $f$ can be written as
	\[
		f= [h_Q H_\gamma^* g_{W_1}- g_{W_1}H_\gamma h_Q]
		+ [h_{W_1}H_\gamma g_P- g_P H_\gamma^* h_{W_1}]+ \widetilde f_P,
	\]
	where
	\[
		h_Q := \frac f{H_\gamma^* g_{W_1}},\quad h_{W_1} := \frac{g_{W_1} H_\gamma h_Q}{ H_\gamma g_P},\quad
		\widetilde f_P := g_P H_\gamma^* \Bigg( \frac{g_{W_1}}{H_\gamma g_P}
		H_\gamma \Big(\frac f{H_\gamma^* g_{W_1}}\Big)\Bigg)
	\]
	and there holds that
	\[
		|h_Q|\lesssim A_1 |f|,\qquad |h_{W_1}|\lesssim A_1 \|f\|_\infty 1_{W_1}.
	\]
	Moreover, suppose that $\int_Q f=0$ and let  $\varepsilon>0$. Then for all $A_1$ large enough (independently of $Q$), there holds that
	\begin{align}\label{eq:prop:AWF1}
		\int_{P} \widetilde f_P =0,\qquad |\widetilde f_P| \lesssim \varepsilon \|f\|_\infty 1_P.
	\end{align}
\end{prop}

\begin{proof}
	The identity for $f$ (provided that all of the functions are well-defined) is simple
	(everything just cancels out leaving $f$).
	So, it only remains to deal with the estimates.

	Fix $x\in Q$. We denote
	\[
		I^{\cen}(x, W_1)= \big\{ t\in I(x, W_1): g_{W_1}( x+\gamma(t))=1\big\}.
	\]
	By Lemma \ref{lem:gw1} we have that, if $x+\gamma(t)= y+ \gamma(s)\in W_1$ with $y\in P^{\cen}$, then
	\[
		g_{W_1}( x+\gamma(t))=1.
	\]
	Therefore, we have
	\begin{equation}\label{eq:e40}
		I(x, W_1)\setminus I^{\cen}(x, W_1)\subset  \big\{ t: \exists y\in P\setminus P^{\cen}
		\,\, \text{such that}\,\,  x+\gamma(t)= y+ \gamma(s)\in W_1\big\}.
	\end{equation}
	To estimate $|I(x, W_1)\setminus I^{\cen}(x, W_1) |$ we will follow ideas from the proof
	of Lemma \ref{lem:width}. Indeed, in the proof of Lemma \ref{lem:width} we had a
	$t_1$ associated with the top left corner of $P$, and a $t_1+h$ associated with the bottom right
	corner of $P$. Here we will have some $h_1$ associated with $P_1^{lt}$ and some $h_2$
	associated with $P_1^{rb}$. Indeed, using the notation from the proof of Lemma \ref{lem:width}
	we have the following analogies of \eqref{eq:e1006} and \eqref{eq:e1007}:
	\begin{align*}
		h_1                        & = \frac{\ell(I)}{2\beta A_2^{\beta-1}}-| s_2|+ |s_1|, \\
		(t_1+h_1)^\beta- t_1^\beta & = -\frac{\ell(I)^\beta}2 - |s_2|^\beta +|s_1|^\beta.
	\end{align*}
	Following the computations there we get
	\begin{align*}
		h_1\le \frac{\frac{\ell(I)}{2(A_1-2)^{\beta-1}} + \frac{\beta\ell(I)}{2\beta A_2^{\beta-1}}\cdot (\frac{A_2+3}{A_1-2})^{\beta-1}}{\beta [(\frac{A_2-3}{A_1+2})^{\beta-1}-1]}\lesssim \frac{\ell(I)}{\beta A_2^{\beta-1}}.
	\end{align*}
	The estimate for $h_2$ is the same. Hence
	\[
		|I(x, W_1)\setminus I^{\cen}(x, W_1)|\le
		h_1+h_2\lesssim \frac{\ell(I)}{\beta A_2^{\beta-1}}\le \frac{|I(x, W_1)|}{\beta A_2^{\beta-1}}.
	\]

	Notice now that
	$$
		H_\gamma^* g_{W_1}(x) = \int_{I(x, W_1)} g_{W_1} (x+\gamma (t))  \frac{\ud t}{t}
		\ge \int_{I^{\cen}(x, W_1)} \frac{\ud t}{t} \ge \frac{|I^{\cen}(x, W_1)|}{(A_1+2)\ell(I)}.
	$$
	Hence, we get
	\begin{align}\label{eq:55}
		\Big(1-\frac c{\beta A_2^{\beta-1}}\Big)\frac{|I(x, W_1)|}{(A_1+2)\ell(I)}
		 & \le \frac{|I^{\cen}(x, W_1)|}{(A_1+2)\ell(I)}
		\le H_\gamma^* g_{W_1}(x)                                                                                  \\
		 & = \int_{I(x, W_1)} g_{W_1} (x+\gamma (t)) \frac{\ud t}t\le \frac{|I(x, W_1)|}{(A_1-2)\ell(I)}.\nonumber
	\end{align}
	In particular, by Lemma \ref{lem:width} we have
	\[
		|h_Q(x)| \lesssim A_1 |f(x)|.
	\]
	provided that $A_1$ (and hence $A_2$) is sufficiently large.

	Next, we estimate $h_{W_1}(y)$ for $y \in W_1$. To this end, we first estimate
	$H_\gamma g_P(y)$ (recall $g_P = 1_P$). By a direct computation, we have
	\begin{align*}
		H_\gamma g_P(y)= \int_{\mathbb R}g_P(y-\gamma(t))\frac{\ud t}{t}
		 & =- \int_{\mathbb R}g_P(y+\gamma(t))\frac{\ud t}{t}                      \\
		 & = - \int_{I(y, P)}\frac{\ud t}{t}\sim - \frac{| I(y, P) |}{A_2\ell(I)}.
	\end{align*}
	Then, for $H_{\gamma} h_Q$ we have
	\begin{align*}
		\big| H_\gamma h_Q(y)\big|= \Big|\int_{\mathbb R}h_Q(y-\gamma(t))\frac{\ud t}{t}\Big|
		 & =\Big| \int_{I(y, Q)} h_Q(y-\gamma(t))\frac{\ud t}{t} \Big|\lesssim \|h_Q\|_\infty   \frac{| I(y, Q) |}{A_1\ell(I)},
	\end{align*}
	where
	\[
		I(y, Q) := \{t: y-\gamma(t)\in Q\}.
	\]
	Observe that if $t_1, t_2\in  I(y, Q)$, then
	\[
		\ell(I)^\beta \ge |(y_2- t_1^\beta)-  (y_2- t_2^\beta)|\ge \beta [(A_1-2)\ell(I)]^{\beta-1}|t_2-t_1|.
	\]
	Hence, we have
	\[
		| I(y, Q) |\lesssim \frac{\ell(I)}{\beta A_1^{\beta-1}}.
	\]
	Now, by \eqref{eq:e19}, we have
	\begin{align*}
		|h_{W_1}(y)| & =\Big|\frac{g_{W_1}(y) H_\gamma h_Q(y)}{ H_\gamma g_P(y)}\Big|                      \\
		             & \lesssim \|h_Q\|_\infty  \frac{g_{W_1}(y)A_2 | I(y, Q) |}{A_1| I(y, P) |}           \\
		             & \lesssim A_1 \|f\|_{\infty} \cdot |I(y,P)| \frac{\beta A_2^{\beta-1}}{\ell(I)}
		\cdot \frac{A_2}{A_1} \cdot \frac{\ell(I)}{\beta A_1^{\beta-1}} \cdot \frac{1}{|I(y,P)|}1_{W_1}(y) \\
		             & = A_1 \|f\|_\infty \frac{A_2^{\beta}}{A_1^{\beta}}1_{W_1}(y)
		\lesssim A_1 \|f\|_\infty 1_{W_1}(y).
	\end{align*}

	Having now estimated $h_Q$ and $h_{W_1}$,
	it only remains to deal with $\widetilde f_P$.
	So we are now assuming $\int_Q f = 0$.
	The property $\int_P \widetilde f_P=0$ is obvious:
	\begin{align*}
		\int g_P H_\gamma^* \Bigg( \frac{g_{W_1}}{H_\gamma g_P}
		H_\gamma \Big(\frac f{H_\gamma^* g_{W_1}}\Big)\Bigg)
		= \int g_{W_1} H_\gamma \Big(\frac f{H_\gamma^* g_{W_1}}\Big)
		= \int f = 0.
	\end{align*}
	Fix now $z\in P$. We can write
	\begin{align*}
		\widetilde f_P(z) & = H_\gamma^* \Bigg( \frac{g_{W_1}}{H_\gamma g_P}
		H_\gamma \Big(\frac f{H_\gamma^* g_{W_1}}\Big)\Bigg)(z)                                     \\
		                  & = \int_{I(z, W_1)}  \Big(\frac{g_{W_1}}{H_\gamma g_P}\Big)(z+\gamma(s))
		H_\gamma \Big(\frac f{H_\gamma^* g_{W_1}}\Big) (z+\gamma(s)) \frac{\ud s}{s}                \\
		                  & =  \int_{I(z, W_1)} \int_{I(z+\gamma(s), Q)}
		\Big(\frac{g_{W_1}}{H_\gamma g_P}\Big)(z+\gamma(s))
		\frac {f( z+\gamma(s)-\gamma(t))}{(H_\gamma^* g_{W_1}) (z+\gamma(s)-\gamma(t) )}
		\frac{\ud t}{t}\frac{\ud s}{s}                                                              \\
		                  & =: \int_{I(z, W_1)} \int_{I(z+\gamma(s), Q)} C_{z}(t,s)
		f( z+\gamma(s)-\gamma(t)) \frac{\ud t}{t}\frac{\ud s}{s},
	\end{align*}
	where
	\[
		C_{z}(t,s):= \Big(\frac{g_{W_1}}{H_\gamma g_P}\Big)(z+\gamma(s))
		\frac {1}{(H_\gamma^* g_{W_1}) (z+\gamma(s)-\gamma(t) )}.
	\]
	Observe that if $z\in \{v_{lt}, v_{rb}\}$ then $g_{W_1}(z+\gamma(s))=0$ and trivially $\widetilde f_P(z)=0$.
	So we may assume $z\in P\setminus \{v_{lt}, v_{rb}\}$.
	By \eqref{eq:55}, we have that
	\[
		\frac{(A_1-2)\ell(I)}{ |I(z+\gamma(s)-\gamma(t), W_1)|}\le \frac {1}{(H_\gamma^* g_{W_1}) (z+\gamma(s)-\gamma(t) )}
		\le \frac{(A_1+2)\ell(I)}{ |I^{\cen}(z+\gamma(s)-\gamma(t), W_1)|}.
	\]
	As above, by
	\[
		H_\gamma g_P(z+\gamma(s))= - \int_{I(z+\gamma(s), P)}\frac{\ud t}{t}
	\]
	we have
	\begin{equation}\label{eq:gp}
		- \frac{|I(z+\gamma(s), P) |}{ (A_2-3)\ell(I)}
		\le H_\gamma g_P(z+\gamma(s))
		\le - \frac{|I(z+\gamma(s), P) |}{ (A_2+3)\ell(I)}.
	\end{equation}
	Now, define the set
	\[
		F_z:=\{(t,s): z+\gamma(s)\in W_1, \quad z+\gamma(s)-\gamma(t)\in Q\}
	\]
	and fix $(t_0, s_0)\in F_z$. By definition,
	\begin{align*}
		\frac{C_{z}(t,s)}{C_{z}(t_0,s_0)} & = \frac{g_{W_1}(z+\gamma(s))}{g_{W_1}(z+\gamma(s_0))}
		\cdot \frac{H_\gamma g_P(z+\gamma(s_0))}{H_\gamma g_P(z+\gamma(s))}
		\cdot \frac {(H_\gamma^* g_{W_1}) (z+\gamma(s_0)-\gamma(t_0) )}
		{(H_\gamma^* g_{W_1}) (z+\gamma(s)-\gamma(t) )}                                              \\
		                                  & \sim \frac{g_{W_1}(z+\gamma(s))}{g_{W_1}(z+\gamma(s_0))}
		\cdot \frac{|I(z+\gamma(s_0), P) |}{|I(z+\gamma(s), P) |}\cdot
		\frac{ |I(z+\gamma(s_0)-\gamma(t_0), W_1)|}
		{ |I(z+\gamma(s)-\gamma(t), W_1)|}.
	\end{align*}
	In particular, by \eqref{eq:e20} and Lemma \ref{lem:width} we have
	\[
		\lim_{A_1\to \infty}\frac{C_{z}(t,s)}{C_{z}(t_0,s_0)}=1
	\]
	uniformly on $z, t, s, t_0, s_0$ and $Q$. In other words,
	\[
		\lim_{A_1\to \infty} \sup_{\substack{z\in P\setminus\{v_{lt}, v_{rb}\}\\(t,s)\in F_z}}
		\frac{|C_{z}(t,s)-C_{z}(t_0,s_0)|}{|C_{z}(t_0,s_0)|}=0.
	\]
	However, by \eqref{eq:gp}, \eqref{eq:e19}, \eqref{eq:55} and Lemma \ref{lem:width} we have
	\begin{align}\label{eq:110}
		|C_{z}(t_0,s_0)| & \le \frac{(A_1+2)\ell(I) }{|I^{\cen}(z+\gamma(s_0)-\gamma(t_0), W_1)|}
		\frac{g_{W_1}(z+\gamma(s_0))(A_2+3)\ell(I) }{|I(z+\gamma(s_0), P) | }  \lesssim  A_1^{\beta+1}.
	\end{align}
	So we get
	\[
		\lim_{A_1\to \infty}   A_1^{-(\beta+1)}\sup
		\limits_{\substack{z\in P\setminus\{v_{lt}, v_{rb}\}\\(t,s)\in F_z}}
		|C_{z}(t,s)-C_{z}(t_0,s_0) |  =0.
	\]
	This means that for any $\varepsilon>0$, there exists some $A(\varepsilon)>0$
	such that as long as $A_1>A(\varepsilon)$, we have
	\[
		\sup\limits_{\substack{z\in P\setminus\{v_{lt}, v_{rb}\}\\(t,s)\in F_z}}
		|C_{z}(t,s)-C_{z}(t_0,s_0) | <\varepsilon A_1^{\beta+1}.
	\]
	Hence, if we define
	\[
		D_{z}f :=\int_{I(z, W_1)}
		\int_{I(z+\gamma(s), Q)} (C_{z}(t,s)- C_{z}(t_0,s_0) ) f( z+\gamma(s)-\gamma(t))
		\frac{\ud t}{t}\frac{\ud s}{s},
	\]
	we have
	\begin{align*}
		|D_{z}f| & \le \varepsilon A_1^{\beta+1}  \|f\|_\infty  \int_{I(z, W_1)}
		\int_{I(z+\gamma(s), Q)}\frac{\ud t}{|t|}\frac{\ud s}{|s|}                  \\
		         & \lesssim \varepsilon A_1^{\beta+1} \|f\|_\infty \int_{I(z, W_1)}
		\frac{| I(z+\gamma(s), Q)|}{A_1  \ell(I)}\frac{\ud s}{|s|}                  \\
		         & \lesssim  \varepsilon A_1^{\beta+1} \|f\|_\infty
		\frac{\ell(I)}{\beta A_1^{\beta-1}} \frac{|I(z, W_1)|}{A_1 A_2 \ell(I)^2}
		\lesssim \varepsilon \|f\|_\infty,
	\end{align*}
	where we have used the already familiar bound
	\[
		| I(z+\gamma(s), Q)|\lesssim \frac{\ell(I)}{\beta A_1^{\beta-1}}
	\]
	and
	\[
		|I(z, W_1) |\le  |  (-(A_2+3)\ell(I), -(A_2-3)\ell(I))| =6\ell(I).
	\]

	It now remains to control
	\[
		\widetilde f_P(z)- D_{z}f
		= C_{z}(t_0,s_0)  \int_{I(z, W_1)} \int_{I(z+\gamma(s), Q)}
		f( z+\gamma(s)-\gamma(t)) \frac{\ud t}{t}\frac{\ud s}{s}.
	\]
	Consider the map $h_z\colon F_z \to Q$ defined by
	\[
		h_z(t, s)=z+\gamma(s)-\gamma(t).
	\]
	For any $x\in Q$ and $z\in P\setminus \{v_{lt}, v_{rb}\}$, by Lemma \ref{lem:11} there is
	a unique pair $(t,s)$ with
	\[
		x+\gamma(t) = z+\gamma(s)\in W_1.
	\]
	This means that given any $x\in Q$ and $z\in P$, we
	find a unique $(t_{x,z},s_{x,z})$ such that $(t_{x,z},s_{x,z})\in F_z$. Thus, $h_z$ is bijective,
	and clearly $h_z$ is differentiable. Set $\rho(t,s) := (ts)^{-1}$.
	By a change of variables, we have
	\begin{align*}
		\int_{I(z, W_1)} \int_{I(z+\gamma(s), Q)}  f( z+\gamma(s)-\gamma(t)) \frac{\ud t}{t}\frac{\ud s}{s}
		 & = \int_{F_z} f(h_z(t,s))\rho(t,s)\ud (t,s)                                      \\
		 & = \int_{Q} f(x) \rho (h_z^{-1} (x)) \det J_{h_z^{-1}}(x) \ud x                  \\
		 & = \int_{Q} f(x) \rho (h_z^{-1} (x))\frac{\ud x}{\det J_{h_z}(t_{x,z},s_{x,z})},
	\end{align*}
	where we denote $(t_{x,z},s_{x,z}) = h_z^{-1}(x).$
	A direct computation gives that
	\[
		\det J_{h_z}(t_{x,z},s_{x,z})= \det \begin{bmatrix}
			-1                       & 1                          \\
			-\beta t_{x,z}^{\beta-1} & \beta (-s_{x,z})^{\beta-1}
		\end{bmatrix} = \beta (t_{x,z}^{\beta-1}- (-s_{x,z})^{\beta-1}).
	\]
	Due to this we define
	\[
		\theta_z(x)= \frac{1}{\beta (t_{x,z}^{\beta-1}- (-s_{x,z})^{\beta-1}) } \frac 1{ t_{x,z}s_{x,z}}
		, \quad \textup{where} \quad t_{x,z}\sim A_1 \ell(I),\quad s_{x,z}\sim - A_2 \ell(I).
	\]
	We have
	\begin{align*}
		\int_{I(z, W_1)} & \int_{I(z+\gamma(s), Q)}  f( z+\gamma(s)-\gamma(t)) \frac{\ud t}{t}\frac{\ud s}{s}    \\
		                 & = \int_{ Q} f(x) \theta_z(x) \ud x=\int_{ Q} f(x) (\theta_z(x) - \theta_z(c_Q))\ud x,
	\end{align*}
	where in the last step we have finally used the critical assumption $\int_Q f = 0$.
	Here $c_Q$ is the center of $Q$.
	Since
	\begin{align*}
		|\theta_z(x) - \theta_z(c_Q)|
		 & = \Bigg|\frac{1}{\beta (t_{x,z}^{\beta-1}- (-s_{x,z})^{\beta-1})   t_{x,z}s_{x,z}}
		\Bigg( 1- \frac{ (t_{x,z}^{\beta-1}- (-s_{x,z})^{\beta-1}) t_{x,z}s_{x,z}}
		{ (t_{c_Q,z}^{\beta-1}- (-s_{c_Q,z})^{\beta-1})   t_{c_Q,z}s_{c_Q,z}}\Bigg)\Bigg|     \\
		 & \lesssim (A_1\ell(I))^{-(\beta+1)}
		\Bigg| 1- \frac{\big((A_2+3)^{\beta-1}-(A_1-2)^{\beta-1}\big)(A_2+3)(A_1+2)}
		{\big((A_2-3)^{\beta-1}-(A_1+2)^{\beta-1}\big)(A_2-3)(A_1-2)}\Bigg|                   \\
		 & \le  (A_1\ell(I))^{-(\beta+1)}\varepsilon,
	\end{align*}
	provided $A_1$ is big enough, combining the above and \eqref{eq:110} we get
	\begin{align*}
		|\widetilde f_P(z)- D_{z}f| & \lesssim A_1^{\beta+1}  (A_1\ell(I))^{-(\beta+1)} \varepsilon \|f\|_\infty |Q| =  \varepsilon\|f\|_\infty.
	\end{align*}
	This completes the proof.
\end{proof}

\begin{proof}[Proof of Theorem \ref{thm:main}]
	Fix $Q = I \times J \in \mathcal R_{\gamma}$
	and then choose $f$ with $\|f\|_{L^{\infty}} \lesssim 1$ and $\int_Q f = 0$ so that
	\[
		\int_Q|b- \langle b\rangle_Q| = \int b f.
	\]
	Now, apply our approximate weak factorization, Proposition \ref{prop:awf},
	to get the said decomposition of $f$.
	Then, since $(Q, W_1)$ and $(P, W_2)$ are in symmetric roles,
	we also have the following factorization for $\widetilde f_P$:
	\[
		\widetilde f_P = [u_P H_\gamma g_{W_2}- g_{W_2}H_\gamma^* u_P]+
		[u_{W_2}H_\gamma^* g_Q- g_Q H_\gamma u_{W_2}]+ \widetilde f_Q,
	\]
	where
	\[
		u_P= \frac { \widetilde f_P}{H_\gamma g_{W_2}},\quad
		u_{W_2}= \frac{g_{W_2} H_\gamma^* u_P}{ H_\gamma^* g_Q},\quad
		\widetilde f_Q= g_Q H_\gamma \Bigg( \frac{g_{W_2}}{H_\gamma^* g_Q}
		H_\gamma^* \Big(\frac { \widetilde f_P}{H_\gamma g_{W_2}}\Big)\Bigg)
	\]
	and there holds that
	\[
		|u_P|\lesssim A_1 | \widetilde f_P| \lesssim A_1 1_P,\qquad
		|u_{W_2}|\lesssim A_1 \|  \widetilde f_P\|_\infty 1_{W_2}\lesssim A_1  1_{W_2}.
	\]
	Moreover, given $\epsilon > 0$, for all $A_1$ large enough (independently of $Q$), there holds that
	\[
		\int_{Q} \widetilde f_Q =0,\qquad |\widetilde f_Q|
		\lesssim \varepsilon \|  \widetilde f_P\|_\infty 1_Q
		\lesssim \varepsilon^2 1_Q .
	\]
	By our decomposition, we have
	\begin{align*}
		\int bf & =-  \int g_{W_1}[b, H_\gamma](h_Q)+ \int h_{W_1}  [b, H_\gamma](g_P) + \int u_P[b, H_\gamma](g_{W_2}) \\
		        & \quad- \int  g_Q [b, H_\gamma](u_{W_2})+ \int b \widetilde f_Q.
	\end{align*}
	By the vanishing moment of $ \widetilde f_Q$ we have
	\[
		\int b \widetilde f_Q= \int_Q (b-\langle b\rangle_Q) \widetilde f_Q.
	\]
	Hence, we get
	\begin{align*}
		\int_Q|b- \langle b\rangle_Q| & \le \| [b, H_\gamma] \|_{L^p\to L^p}\big( \| g_{W_1}\|_{L^{p'}}
		\| h_Q\|_{L^p} +\| h_{W_1}\|_{L^{p'}} \| g_P\|_{L^p} +
		\| g_{W_2}\|_{L^{p}} \| u_P\|_{L^{p'}}                                                          \\
		                              & \qquad+ \| u_{W_2}\|_{L^{p}} \| g_Q\|_{L^{p'}}\big)
		+ C \varepsilon^2\int_Q|b- \langle b\rangle_Q|                                                  \\
		                              & \le C_{A_1}\| [b, H_\gamma] \|_{L^p\to L^p}|Q|
		+ C \varepsilon^2\int_Q|b- \langle b\rangle_Q|,
	\end{align*}
	where $C$ is absolute constant and we have, in addition to the various size estimates of the appearing
	functions, used Lemma \ref{lem:sizew}. Hence, by fixing $\varepsilon$ small enough (that is $A_1$ big enough)
	we obtain
	\[
		\frac 1{|Q|}\int_Q|b- \langle b\rangle_Q|\lesssim \| [b, H_\gamma] \|_{L^p\to L^p}.
	\]
	This completes the proof.
\end{proof}

\section{Curves with non-vanishing torsion}\label{sec:nonvan}
In this section we consider curves with non-vanishing torsion. In particular, we are interested in $C^2$ curves $\gamma: [-1,1] \to \R^2$ with a Taylor expansion of the form
\[
	\gamma(t)= \gamma'(0) t+ \frac{\gamma''(0)}{2} t^2 + R(t), \qquad t\in (-1,1),
\]
where $\gamma'(0), \gamma''(0)$ are linearly independent
and the remainder term $R$ is assumed to satisfy
\begin{align}\label{eq:error}
	R\in C^2(-1,1),\qquad \lim_{|t|\to 0} \frac{|R(t)|}{|t|^2} = 0.
\end{align}

Writing $\gamma'(0),  \gamma''(0)$
as column vectors we define the matrix
\[
	A:=
	\begin{bmatrix}
		\gamma'(0) & \frac{\gamma''(0)}2
	\end{bmatrix}.
\]
Then a straightforward computation shows that the matrix $O,$
\begin{equation}\label{eq:matrixo}
	O:= A B,\qquad B :=  \begin{bmatrix}
		|\gamma'(0)|^{-1} & -\Big\langle \frac{\gamma''(0)}2, \frac{\gamma'(0)}{|\gamma'(0)|}
		\Big\rangle |\gamma'(0)|^{-1} c                                                       \\ 0 & c\end{bmatrix},
\end{equation}
is orthonormal, where
\[
	c := \Bigg|\frac{\gamma''(0)}2- \Big\langle \frac{\gamma''(0)}2, \frac{\gamma'(0)}{|\gamma'(0)|}
	\Big\rangle\frac{\gamma'(0)}{|\gamma'(0)|}\Bigg|^{-1}.
\]

Denote the columns of $O$ by $e_1, e_2$. As in \cite{ClOu}, we let $\mathcal R_\gamma$
be the collection of rectangles with sides parallel to $e_1, e_2$
and satisfying that, for some $\ell$, the side-length of the side parallel to $e_j$ is $\ell^j$, $j=1,2$.
We say $b\in \BMO_\gamma(\R^2)$ if
\begin{align*}
	\|b\|_{\BMO_\gamma(\R^2)} :=
	\sup_{\substack{Q\in \mathcal R_\gamma \\ \ell(Q)\le 1} }\frac 1{|Q|}\int_Q|b- \langle b\rangle_Q|,
\end{align*}
where $\ell(Q)$ denotes the side-length of the side parallel to $e_1$.
It is stated in \cite{BGLW} that if $b\in \BMO_\gamma(\R^2)$,
then $[b, H_\gamma]$ is bounded in $L^p$, where
\[
	H_\gamma f(x) := \int_{-1}^1 f(x-\gamma(t))\frac{\ud t}t.
\]

Let $\mathcal P$ be the collection of all the parabolic cubes in $\R^2$ --
recall that $Q=I\times J$ is a parabolic cube if the sides of $Q$ are parallel
to the coordinate axes and $\ell(J)=\ell(I)^2$. For a parabolic cube we define $\ell(Q)=\ell(I)$.
Then it is clear that $\mathcal R_\gamma= O\mathcal P$.

We next record several small auxiliary results for the various appearing $\BMO$ spaces.
\begin{lem}\label{lem:small}
	Let $E$ be an invertible $2\times 2$ matrix.
	Let $1\le p<\infty$ and  $0<\tau<1$. Then we have that
	\begin{equation}\label{eq:2424}
		\sup_{\substack{Q\in \mathcal P\\ \ell(Q)\le 1} }
		\Big(\frac 1{|EQ|}\int_{EQ}|b- \langle b\rangle_{EQ}|^p\Big)^{1/p}
		\lesssim_{p,\tau} \sup_{\substack{Q\in \mathcal P\\ \ell(Q)\le \tau} }
		\Big(\frac 1{|EQ|}\int_{EQ}|b- \langle b\rangle_{EQ}|^p\Big)^{1/p},
	\end{equation}
	where the implicit constant is independent from $E$.
\end{lem}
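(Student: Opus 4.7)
The plan is to first remove the matrix $E$ by a change of variables, reducing the claim to a purely parabolic $\BMO$ self-improvement statement that is automatically $E$-independent. Setting $\tilde b := b\circ E$, the substitution $x = Ey$ gives
\[
    \tfrac{1}{|EQ|}\int_{EQ}|b - \langle b\rangle_{EQ}|^p
    = \tfrac{1}{|Q|}\int_{Q}|\tilde b - \langle \tilde b\rangle_{Q}|^p
\]
for every parabolic $Q \in \mathcal P$, since $|\det E|$ cancels in both numerator and denominator, and $\langle b\rangle_{EQ} = \langle \tilde b\rangle_Q$. Hence it suffices to show, for every $f \in L^p_{\loc}(\R^2)$,
\[
    \sup_{\substack{Q \in \mathcal P \\ \ell(Q) \le 1}}
    \Big(\tfrac{1}{|Q|}\int_Q |f - \langle f\rangle_Q|^p\Big)^{1/p}
    \lesssim_{p,\tau}
    \sup_{\substack{Q \in \mathcal P \\ \ell(Q) \le \tau}}
    \Big(\tfrac{1}{|Q|}\int_Q |f - \langle f\rangle_Q|^p\Big)^{1/p}
    =: \|f\|_{\BMO_\tau},
\]
and the lack of any $E$-dependence in the resulting constant is manifest.

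Next I would fix $Q_0 \in \mathcal P$ with $\tau < \ell(Q_0) \le 1$ (the case $\ell(Q_0) \le \tau$ is trivial) and partition $Q_0$ into $N_\tau \lesssim_\tau 1$ parabolic sub-cubes $\{Q_i\}$ of side $\ell(Q_i) \in (\tau/2,\tau]$ via iterated parabolic $8$-adic bisection (each bisection of a parabolic cube produces $8$ sub-cubes of half the parabolic side). Using $(a+b)^p \lesssim_p a^p + b^p$ and $\sum_i |Q_i| = |Q_0|$, I would split
\[
    \tfrac{1}{|Q_0|}\int_{Q_0} |f - \langle f\rangle_{Q_0}|^p
    \lesssim_p \|f\|_{\BMO_\tau}^p
    + \tfrac{1}{|Q_0|} \sum_i |Q_i|\, |\langle f\rangle_{Q_i} - \langle f\rangle_{Q_0}|^p,
\]
where the first piece is already in the desired form. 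Everything thus reduces to the bound $|\langle f\rangle_{Q_i} - \langle f\rangle_{Q_0}| \lesssim_\tau \|f\|_{\BMO_\tau}$ for each $i$.

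For this I would first observe that $\langle f\rangle_{Q_0}$ is a convex combination of the $\langle f\rangle_{Q_j}$'s, so it is enough to control $|\langle f\rangle_{Q_i} - \langle f\rangle_{Q_j}|$ for any pair. Let $P_i, P_j$ denote the parabolic cubes of side exactly $\tau$ centered at the centers of $Q_i, Q_j$; since $Q_i \subset P_i$ with $|P_i|/|Q_i| \le 8$, H\"older yields $|\langle f\rangle_{Q_i} - \langle f\rangle_{P_i}| \lesssim \|f\|_{\BMO_\tau}$, and likewise for $Q_j, P_j$. I would then chain $P_i$ to $P_j$ through parabolic $\tau$-cubes $R_0 = P_i, R_1, \dots, R_L = P_j$ centered at equispaced points on the segment joining the two centers, with spacing of order $\tau^2$ chosen so that consecutive cubes satisfy $|R_k \cap R_{k+1}| \gtrsim |R_k|$. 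Since $\mathrm{diam}(Q_0) \le \sqrt{2}$, the number of hops satisfies $L \lesssim_\tau 1$; the standard one-step $\BMO$ comparison through the shared overlap yields $|\langle f\rangle_{R_k} - \langle f\rangle_{R_{k+1}}| \lesssim \|f\|_{\BMO_\tau}$, and summing the telescoping chain gives $|\langle f\rangle_{P_i} - \langle f\rangle_{P_j}| \lesssim_\tau \|f\|_{\BMO_\tau}$, completing the argument.

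The only genuinely delicate point is the chaining: one must verify both that the number $L$ of hops depends only on $\tau$ (not on $Q_0$ or $f$) and that, under the non-isotropic parabolic scaling, consecutive $\tau$-cubes indeed overlap in a set of measure comparable to $\tau^3$. The former uses the uniform bound $\mathrm{diam}(Q_0) \lesssim 1$, while the latter forces the chain spacing to be $\lesssim \tau^2$ rather than $\lesssim \tau$, so some care is needed in how one parametrizes the connecting segment to accommodate the anisotropy of the parabolic cubes.
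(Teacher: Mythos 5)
Your proof is correct and follows the same route as the paper: a change of variables ($x=Ey$, with $|\det E|$ cancelling) reduces the claim to the $E=\mathrm{id}$ case, i.e.\ to the standard self-improvement of parabolic $\BMO$ when the supremum is restricted to small cubes. The paper dismisses this last step as ``exactly as in the Euclidean case''; your partition into $\lesssim_\tau 1$ parabolic subcubes plus the $\tau^2$-spaced chaining of overlapping $\tau$-cubes is precisely a correct write-up of that standard argument, with the anisotropy handled properly.
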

\begin{proof}  By the change of variables
	$
		\int_{EQ}f  = |\det E|\int_Q f\circ E$ and $|EQ| = |\det E||Q|
	$
	it is enough to prove claim with  $E=\id$.
	But for the collection of parabolic cubes $\calP$ the claim
	is proved exactly as in the Euclidean case, which is simple.
\end{proof}
\begin{rem}
	Following the same scheme we also record the following estimates, which will be needed later:
	\begin{equation}\label{eq:26422}
		\sup_{\substack{Q\in \mathcal P\\ \ell(Q)\le \Lambda \tau} }
		\Big(\frac 1{|EQ|}\int_{EQ}|b- \langle b\rangle_{EQ}|^p\Big)^{1/p}
		\lesssim_{p,\Lambda} \sup_{\substack{Q\in \mathcal P\\ \ell(Q)\le \tau} }
		\Big(\frac 1{|EQ|}\int_{EQ}|b- \langle b\rangle_{EQ}|^p\Big)^{1/p},
	\end{equation}
	where $\Lambda\ge 1$ and the implicit constant is independent from $E$ and $\tau$.
\end{rem}

We also make the following observation that removes the matrix $B$ from the rest of our considerations.
\begin{lem}\label{lem:equibmo}
	Let $1\le p<\infty$. Then
	\[
		\sup_{Q\in \mathcal P\,: \ell(Q)\le 1} \Big(\frac 1{|AQ|}\int_{AQ}
		|b- \langle b\rangle_{AQ}|^p\Big)^{1/p}
		\sim \sup_{R\in \mathcal R_\gamma:\, \ell(R)\le 1}
		\Big(\frac 1{|R|}\int_{R}|b- \langle b\rangle_{R}|^p\Big)^{1/p}.
	\]
\end{lem}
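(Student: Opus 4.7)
The plan is to prove the two inequalities separately by a direct geometric comparison between the parallelogram family $\{AQ : Q \in \mathcal P\}$ (left-hand side) and the rectangle family $\mathcal R_\gamma = \{OQ : Q \in \mathcal P\}$ (right-hand side). The bridge is the factorization $O = AB$, equivalently $A = OB^{-1}$, from \eqref{eq:matrixo}. Throughout I use the standard $L^p$ oscillation comparison: if $F \subseteq G$ and $|G| \le C|F|$, then
\[
\frac{1}{|F|}\int_F|b - \langle b\rangle_F|^p \lesssim_{C,p} \frac{1}{|G|}\int_G|b - \langle b\rangle_G|^p,
\]
obtained by testing the $L^p$ mean over $F$ with the constant $\langle b\rangle_G$.

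The key geometric observation is the following. By \eqref{eq:matrixo}, both $B$ and $B^{-1}$ are upper triangular with positive diagonal entries and some real upper-right entry. Inspecting vertices, one sees that for any parabolic cube $Q$ of side $\ell \in (0,1]$, the parallelogram $BQ$, resp.\ $B^{-1}Q$, has horizontal extent $\lesssim_B \ell$ and vertical extent $\lesssim_B \ell^2$; it is crucial here that $\ell \le 1$, since this lets one dominate the shear contribution of order $\ell^2$ by a multiple of $\ell$ in the horizontal direction. Consequently $BQ$, resp.\ $B^{-1}Q$, is contained in some parabolic cube $Q^*$ with $\ell(Q^*) \le C_B\,\ell$ and $|Q^*| \sim |BQ|$, resp.\ $|B^{-1}Q|$, where $C_B \ge 1$ depends only on $B$.

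For the direction ``$\lesssim$'', I would first apply Lemma \ref{lem:small} with $E = A$ and $\tau = 1/C_B$ to replace the left-hand supremum by the one restricted to $\ell(Q) \le 1/C_B$. For each such $Q$, the geometric observation applied to $B^{-1}$ yields a parabolic cube $Q^*$ with $B^{-1}Q \subseteq Q^*$, $\ell(Q^*) \le C_B\ell(Q) \le 1$, and $|Q^*| \sim |B^{-1}Q|$. Applying $O$ produces $AQ = O(B^{-1}Q) \subseteq OQ^* =: R \in \mathcal R_\gamma$ with $\ell(R) \le 1$ and $|R| \sim |AQ|$, and the oscillation comparison then gives
\[
\frac{1}{|AQ|}\int_{AQ}|b - \langle b\rangle_{AQ}|^p \lesssim \frac{1}{|R|}\int_R|b - \langle b\rangle_R|^p;
\]
taking the supremum concludes this direction.

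The direction ``$\gtrsim$'' is entirely symmetric: Lemma \ref{lem:small} with $E = O$ and $\tau = 1/C_B$ first restricts the right-hand supremum to $\ell(Q') \le 1/C_B$; the geometric observation applied to $B$ then yields $Q^* \in \mathcal P$ with $BQ' \subseteq Q^*$, $\ell(Q^*) \le 1$, and $|Q^*| \sim |BQ'|$, so that $R = OQ' = A(BQ') \subseteq AQ^*$ with $|AQ^*| \sim |R|$, and the same oscillation comparison concludes. The only subtle point in the whole argument is the scale bookkeeping: each geometric step multiplies side lengths by $C_B$, and Lemma \ref{lem:small} is precisely what lets us absorb that factor by preshrinking the supremum's range of $\ell$.
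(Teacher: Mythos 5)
Your proof is correct and follows essentially the same route as the paper: the factorization $O=AB$, the upper-triangular (shear) structure of $B$ and $B^{-1}$ to trap the image of a parabolic cube inside a comparable parabolic cube at scale $\lesssim_B \ell$, the standard oscillation comparison for nested sets of comparable measure, and Lemma \ref{lem:small} to absorb the resulting scale factor. The only cosmetic difference is that you conjugate by $O$ and compare $AQ$ with $OQ^*$ directly, whereas the paper first changes variables to work with $b\circ A$ over $Q$ versus $BQ$; these are equivalent.
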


\begin{proof}
	We begin with the observation that $\mathcal R_\gamma=\{ABQ: Q\in \mathcal P\}$,
	where $B$ is defined in \eqref{eq:matrixo}. Again,
	a change of variables gives that $\int_{AQ}b  = |\det A|\int_Q b\circ A$ and $|AQ| = |\det A||Q|.$
	Hence, it suffices to connect the quantities
	\[
		\sup_{Q\in \mathcal P: \ell(Q)\le 1}
		\Big(\frac 1{|Q|}\int_Q|b\circ A- \langle b\circ A\rangle_Q|^p\Big)^{1/p}
	\]
	and
	\[
		\sup_{Q\in \mathcal P: \ell(Q)\le 1}
		\Big(\frac 1{|BQ|}\int_{BQ}|b\circ A- \langle b\circ A\rangle_{BQ}|^p\Big)^{1/p}
	\]
	by suitable  coverings.
	Suppose the lower left corner of $Q$ is $(u_1, u_2)$. Then $B$ maps $ (u_1, u_2)+ \ell(I) (1,0)$ into
	\[
		B \left(
		\begin{array}{c}
				u_1 \\
				u_2 \\
			\end{array}
		\right)+ \ell(I)B \left(
		\begin{array}{c}
				1 \\
				0 \\
			\end{array}
		\right)= B \left(
		\begin{array}{c}
				u_1 \\
				u_2 \\
			\end{array}
		\right)+ |\gamma'(0)|^{-1} \ell(I)\left(
		\begin{array}{c}
				1 \\
				0 \\
			\end{array}
		\right).
	\]
	Also, $B$ maps $ (u_1, u_2)+ \ell(I)^2 (0,1)$ into
	\[
		B \left(
		\begin{array}{c}
				u_1 \\
				u_2 \\
			\end{array}
		\right)+ \ell(I)^2 B \left(
		\begin{array}{c}
				0 \\
				1 \\
			\end{array}
		\right)= B \left(
		\begin{array}{c}
				u_1 \\
				u_2 \\
			\end{array}
		\right)+ c \ell(I)^2  \left(
		\begin{array}{c}
				-\Big\langle \frac{\gamma''(0)}2, \frac{\gamma'(0)}{|\gamma'(0)|}\Big\rangle |\gamma'(0)|^{-1} \\
				1                                                                                              \\
			\end{array}
		\right).
	\]
	So we have seen that $B$ maps $Q$ into a parallelogram with one side parallel to the axis.
	When $\ell(Q)$ is sufficiently small, it is clear that
	there is a parabolic cube $Q_1$ such that $ BQ\subset Q_1$ and $\ell(Q_1) \sim \ell(Q)$.
	Hence, we have
	\begin{align*}
		\Big(\frac 1{|BQ|}\int_{BQ}|b\circ A- \langle b\circ A\rangle_{BQ}|^p\Big)^{1/p}
		 & \le 2\Big(\frac 1{|BQ|}\int_{BQ}|b\circ A- \langle b\circ A\rangle_{Q_1}|^p\Big)^{1/p}        \\
		 & \lesssim \Big(\frac 1{|Q_1|}\int_{Q_1}|b\circ A- \langle b\circ A\rangle_{Q_1}|^p\Big)^{1/p}.
	\end{align*}

	Now, $B^{-1}$ is also an upper triangular matrix.
	When $\ell(Q)$ is sufficiently small, there is a parabolic cube $Q_2$ such that
	$Q\subset BQ_2$ (i.e. $B^{-1}Q\subset Q_2$) and $\ell(Q_2) \sim \ell(Q)$, and so
	\[
		\Big(\frac 1{|Q|}\int_{Q}|b\circ A- \langle b\circ A\rangle_{Q}|^p\Big)^{1/p}
		\lesssim \Big(\frac 1{|BQ_2|}\int_{BQ_2}|b\circ A- \langle b\circ A\rangle_{BQ_2}|^p\Big)^{1/p}.
	\]
	Thus, the desired equivalence is proved for $\ell(Q)$ small enough.
	The general case is obtained by applying Lemma \ref{lem:small}.
\end{proof}

Finally, we note the John-Nirenberg property of $\BMO_\gamma(\R^2)$.
\begin{lem}\label{lem:JNP}
	Suppose that $b\in \BMO_\gamma(\R^2)$. Then for any $1<p<\infty$ and $0<\tau<1$ we have
	\[
		\sup_{\substack{Q\in \mathcal R_\gamma\\ \ell(Q)\le \tau} }
		\Big(\frac 1{|Q|}\int_Q |b\circ A- \langle b\circ A\rangle_Q |^p\Big)^{\frac 1p}\lesssim \sup_{\substack{Q\in \mathcal P\\ \ell(Q)\le \tau}}\frac 1{|Q|}\int_Q |b\circ A- \langle b\circ A\rangle_Q |.
	\]
\end{lem}
\begin{proof}
	By a change of variables the proof is essentially as in the classical Euclidean case.
\end{proof}

Now, a crucial observation is that to estimate
$\|b\|_{\BMO_\gamma(\R^2)}$, by Lemma \ref{lem:small}, Lemma \ref{lem:equibmo}
and a change of variables we know that it is enough to bound
\[
	\sup_{\substack{Q\in \mathcal P\\ \ell(Q)\le \tau}}\frac 1{|Q|}\int_Q |b\circ A- \langle b\circ A\rangle_Q |,
\]
where $\tau$ is a sufficiently small number to be specified later.
Fix $Q$. By applying the result in \cite{OIKARI-PARABOLIC} we get that
\begin{align*}
	\frac 1{|Q|}\int_Q |b\circ A- \langle b\circ A\rangle_Q |
	 & \lesssim \frac 1{|Q|}\sum_{j=1}^4 \left|\int \psi_j [b\circ A, H_\eta] \varphi_j\right|,
\end{align*}
where $\eta(t) := (t, t^2)$, $(\supp \psi_j, \supp \varphi_j)\in \{(Q, W), (P, W), (W, Q), (W, P)\}.$
Here
\[
	|\psi_j|\lesssim 1,\quad |\varphi_j|\lesssim 1,\quad |Q|\sim |W|\sim |P|,
\]
and
\[
	|x_1-y_1| \sim \ell(Q), \quad\forall x\in \supp(\varphi_j)\forall y\in \supp(\psi_j),
\]
which in particular gives
\[
	\dist (\pi_1 P, \pi_1Q)\sim \dist (\pi_1 P, \pi_1W)\sim \dist (\pi_1 Q, \pi_1 W)\sim \ell(Q).
\]
Here $W$ is the key auxiliary set defined in \cite{OIKARI-PARABOLIC} which is different from the $W_1,W_2$ defined in this article.
We have
\begin{align*}
	\int \psi_j [b\circ A, H_\eta] \varphi_j
	 & = \int \psi_j(x)\int \big(b(Ax)- b(Ax- A\eta (t))\big)\varphi_j(x-\eta(t))\frac{\ud t}{t}\ud x \\
	 & = |\det A|^{-1} \int \psi_j(A^{-1}x)\int \big(b(x)- b(x-\wt \gamma (t))\big)
	\varphi_j(A^{-1}x-\eta(t))\frac{\ud t}{t}\ud x                                                    \\
	 & = |\det A|^{-1} \int (\psi_j \circ A^{-1}) [b, H_{\wt \gamma}] (\varphi_j \circ A^{-1}),
\end{align*}
where $\wt \gamma= A\eta$ and we also note that in the above integral the support localization of the pair $(\supp \psi_j, \supp \varphi_j)$ forces
$ |t| \sim   \ell(Q)$.

We are ready to state and prove the main result of this section -- the current proof requires
the a priori qualitative assumption $b \in \BMO_{\gamma}(\R^2)$ that we did not attempt to
remove. A possible way to remove the a priori assumption would be to repeat proofs given in Section \ref{sect:pain}
for curves with non-vanishing torsion.
\begin{thm}
	Suppose $1 < p < \infty$ and $\|[b, H_\gamma]\|_{L^p\to L^p}<\infty$. With the qualitative a priori assumption
	$b\in \BMO_\gamma(\R^2)$ we have the quantitative bound
	\[
		\| b\|_{\BMO_\gamma(\R^2)}\lesssim \| [b, H_{ \gamma}] \|_{L^p\to L^p}.
	\]
\end{thm}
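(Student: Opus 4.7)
The plan is to reduce, via Lemma~\ref{lem:small}, Lemma~\ref{lem:equibmo} and a change of variables, to controlling
\[
\sup_{\substack{Q\in\mathcal P\\ \ell(Q)\le\tau}}\frac{1}{|Q|}\int_Q |b\circ A-\langle b\circ A\rangle_Q|
\]
for a conveniently small $\tau>0$, and then to apply the parabolic awf result of \cite{OIKARI-PARABOLIC}, transported through the change of variables $y=Ax$, which expresses each such oscillation as a finite sum of pairings $|\det A|^{-1}\int(\psi_j\circ A^{-1})\,[b,H_{\widetilde\gamma}](\varphi_j\circ A^{-1})$, where $\widetilde\gamma=A\eta$ is just the parabolic part of~$\gamma$. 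What remains is the passage from the model commutator $[b,H_{\widetilde\gamma}]$ to the genuine commutator $[b,H_\gamma]$, with an error that can be absorbed using the a priori BMO hypothesis.

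Accordingly, I would split
\[
[b,H_{\widetilde\gamma}] = [b,H_\gamma] - [b,H_\gamma-H_{\widetilde\gamma}]
\]
and bound the two pieces separately. For the principal piece, H\"older's inequality together with the estimates $|\psi_j|,|\varphi_j|\lesssim 1$ and the support bounds $|\text{supp}(\psi_j\circ A^{-1})|,|\text{supp}(\varphi_j\circ A^{-1})|\sim|Q|$ noted in the excerpt immediately yields
\[
\frac{1}{|Q|\,|\det A|}\Big|\int (\psi_j\circ A^{-1})\,[b,H_\gamma](\varphi_j\circ A^{-1})\Big|\lesssim \|[b,H_\gamma]\|_{L^p\to L^p},
\]
which is the desired main contribution.

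For the error piece, I would exploit that the effective range of $t$ is $|t|\sim\ell(Q)\le\tau$ and that, with $R(t)=\gamma(t)-\widetilde\gamma(t)$, the hypothesis \eqref{eq:error} gives $|R(t)|\le \varepsilon(\tau) t^2 \le \varepsilon(\tau)\ell(Q)^2$ with $\varepsilon(\tau)\to 0$ as $\tau\to 0$. Writing $u=y-\widetilde\gamma(t)$, $v=y-\gamma(t)$, so that $u-v=R(t)$, the elementary kernel identity
\[
(b(y)-b(v))\omega(v)-(b(y)-b(u))\omega(u) = (b(u)-b(v))\omega(u)+(b(y)-b(v))(\omega(v)-\omega(u))
\]
(with $\omega=\varphi_j\circ A^{-1}$) splits the integrand into an increment of $b$ between two points $|R(t)|$ apart and a difference of $\omega$ between two points $|R(t)|$ apart. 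Paired against $\sigma=\psi_j\circ A^{-1}$, integrated in $(y,t)$, and combined with the a priori assumption $b\in\BMO_\gamma(\R^2)$ and its John--Nirenberg property, both resulting quantities can be controlled by $C\,\varepsilon(\tau)\,\|b\|_{\BMO_\gamma(\R^2)}\,|Q|$.

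Summing and dividing by $|Q|$, one arrives at
\[
\|b\|_{\BMO_\gamma(\R^2)}\le C\|[b,H_\gamma]\|_{L^p\to L^p}+C\varepsilon(\tau)\|b\|_{\BMO_\gamma(\R^2)},
\]
and choosing $\tau$ small enough so that $C\varepsilon(\tau)<1/2$ absorbs the last term -- legitimately thanks to the finiteness of $\|b\|_{\BMO_\gamma(\R^2)}$ granted by the a priori hypothesis. The main obstacle is precisely this error estimate: obtaining a uniform in $Q$ decay $\varepsilon(\tau)$ for the commutator pairing with the difference operator $H_\gamma-H_{\widetilde\gamma}$ requires carefully combining a short-displacement BMO-modulus estimate for $b$ with the boundary-layer structure of the auxiliary functions $\psi_j,\varphi_j$ from \cite{OIKARI-PARABOLIC}, and keeping the smallness independent of both $Q$ and the specific location of the displacement~$R(t)$.
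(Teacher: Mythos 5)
Your reduction, the splitting $[b,H_{\wt\gamma}]=[b,H_\gamma]-[b,H_\gamma-H_{\wt\gamma}]$, the H\"older treatment of the main piece, and the final absorption using the a priori hypothesis all match the paper. The proof breaks down, however, at exactly the point you flag as "the main obstacle", and in two ways. First, the localization $|t|\sim\ell(Q)$ is only automatic for the model operator $H_{\wt\gamma}$, since $|A^{-1}\wt\gamma(t)|\sim|t|$; for $H_\gamma$ the supports only force $|A^{-1}\gamma(t)|\sim\ell(Q)$, and because $\gamma$ is merely $C^2$ on $[-1,1]$ with a Taylor expansion at $0$ (no global bound $|\gamma(t)|\gtrsim|t|$), values $|t|\in(c,1]$ can contribute, and there $R(t)$ is not small at all, so your bound $|R(t)|\le\varepsilon(\tau)\ell(Q)^2$ is unavailable on part of the effective range. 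The paper must treat this regime separately (the term $II_2$), and kills it not by smallness of $R$ but by showing $|I_Q|\to0$ as $\ell(Q)\to0$ via a disjointness-of-scales argument, see \eqref{eq:ec1}, paired with the a priori $\BMO_\gamma$ bound; your proposal has no mechanism for this contribution.

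Second, and more seriously, the increment term $(b(u)-b(v))\,\omega(u)$ cannot be controlled by a "short-displacement BMO-modulus estimate": no such estimate exists, because $\BMO$ functions have no modulus of continuity in any sense uniform over the unit ball of $\BMO_\gamma$. For instance $b(x)=\sin(x_1/\delta)$ has $\|b\|_{\BMO_\gamma}\lesssim1$ for every $\delta$, yet its increments at displacement $\sim\delta$ are of unit size on a fixed proportion of any cube; choosing $\delta$ comparable to $\sup_{|t|\sim\ell(Q)}|R(t)|$ for a single admissible $Q$ with $\ell(Q)\le\tau$ shows that the asserted uniform bound $C\,\varepsilon(\tau)\,\|b\|_{\BMO_\gamma}\,|Q|$ for that term, estimated with absolute values as you propose, is false no matter how small $\tau$ is. (The boundary-layer term with $\omega(v)-\omega(u)$ is the harmless one.) The smallness genuinely comes from oscillation in $t$, not from regularity of $b$: the paper isolates the truncated difference $H^{A^{-1}\wt\gamma}_{\theta\ell(Q),\Theta\ell(Q)}-H^{A^{-1}\gamma}_{\theta\ell(Q),\Theta\ell(Q)}$, proves via Plancherel and van der Corput (Lemma \ref{lem:VDCL}) that its $L^2\to L^2$ norm tends to $0$ as $\ell(Q)\to0$ -- using the size of $R$ only at low frequencies and curvature/non-stationarity at high frequencies -- and only then pairs with $b-\langle b\rangle_{\Lambda Q}$ through John--Nirenberg at exponent $2$. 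Without an ingredient of this oscillatory type, your error estimate cannot be closed, so the argument as proposed does not prove the theorem.
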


\begin{proof}
	By the above discussion, for fixed $Q\in \mathcal P$ with $\ell(Q)\le \tau$ we have
	\begin{align*}
		\frac 1{|Q|}\int_Q |b\circ A- \langle b\circ A\rangle_Q |
		 & \lesssim \sum_{j=1}^4\frac 1{|\det A||Q|}\left| \int (\psi_j \circ A^{-1})
		[b, H_{\wt \gamma}] (\varphi_j \circ A^{-1})\right|                           \\
		 & \le \sum_{j=1}^4\frac 1{|\det A||Q|}\left| \int (\psi_j \circ A^{-1})
		[b, H_{  \gamma}] (\varphi_j \circ A^{-1})\right|                             \\
		 & + \sum_{j=1}^4\frac 1{|\det A||Q|}\left| \int (\psi_j \circ A^{-1})
		[b, H_{  \wt \gamma}-H_{   \gamma} ] (\varphi_j \circ A^{-1})\right|          \\
		 & =: I+II.
	\end{align*}
	The estimate of $I$ is easy -- indeed, we have
	\begin{align*}
		I & \lesssim \sum_{j=1}^4\frac 1{|\det A||Q|}\| \psi_j \circ A^{-1}\|_{L^{p'}}
		\| [b, H_{\gamma}]\|_{L^p\to L^p}\|\varphi_j \circ A^{-1}\|_{L^p}
		\lesssim \| [b, H_{\gamma}]\|_{L^p\to L^p}.
	\end{align*}

	We then focus on the term $II$. Recall that the supports of $\psi_j$ and $\varphi_j$ force
	$|t|\sim \ell(Q)$ in the kernel representation of
	\[
		\int (\psi_j \circ A^{-1}) [b, H_{  \wt \gamma} ] (\varphi_j \circ A^{-1}).
	\]
	Next, we look at
	\begin{align*}
		\int (\psi_j \circ A^{-1}) & [b, H_{   \gamma} ] (\varphi_j \circ A^{-1}) \\
		                           & =\int \psi_j (A^{-1} x)
		\int \big(b(x)- b(x-  \gamma (t))\big)
		\varphi_j(A^{-1}x-A^{-1}\gamma(t))\frac{\ud t}{t}\ud x.
	\end{align*}
	Again,
	we have $ |A^{-1}\gamma(t)|\sim \ell(Q)$. Now, observe that if $|t| \ll \ell(Q)$, then by $\gamma$ being $C^2$ (e.g. $C^1$ with bounded derivative)
	we have
	\[
		|A^{-1} \gamma(t)|\le C|\gamma(t)|=C|\gamma(t)-\gamma(0)|\le C' |t|\ll \ell(Q),
	\]
	which is a contradiction. Hence $|t|\gtrsim \ell(Q)$. On the other hand, by the limiting assumption on the line \eqref{eq:error}
	we may let $c$ be an absolute constant (depending only on $\gamma$) such that
	\[
		|\gamma(t)-\wt \gamma(t)|\le |R(t)| \leq \frac 12 |\wt \gamma(t)|,\quad \forall |t|\in [0,c].
	\]
	We used above that $ |\wt \gamma(t)| \sim |t|.$
	Thus, for $|t|\in [0,c]$ we have $|\gamma(t)|\sim |\wt \gamma(t)|$,
	from which we get that $|t|\sim \ell(Q)$ (assuming $\tau/c$ is sufficiently small).

	Hence, by the above discussion there exits $\theta, \Theta$ such that
	\begin{align*}
		II & =  \sum_{j=1}^4\frac 1{|\det A||Q|}\left| \int (\psi_j \circ A^{-1})
		[b, H_{\theta\ell(Q), \Theta \ell(Q)}^{\wt \gamma }
		-H_{\theta\ell(Q), \Theta \ell(Q)}^{  \gamma} ] (\varphi_j \circ A^{-1})\right| \\
		   & + \sum_{j=1}^4\frac 1{|\det A||Q|}\int_{\R^2} \int_{|t|\in (c,1]}
		\big| \psi_j (A^{-1} x)  \big(b(x)- b(x-  \gamma (t))\big)\varphi_j(A^{-1}x-A^{-1}\gamma(t))\big|
		\frac{\ud t}{t}\ud x                                                            \\
		   & =:II_1+II_2,
	\end{align*}
	where both $ H_{\theta\ell(Q), \Theta \ell(Q)}^{ \wt \gamma}$ and $ H_{\theta\ell(Q), \Theta \ell(Q)}^{ \gamma}$ are defined through the formula
	\[
		H_{\theta\ell(Q), \Theta \ell(Q)}^{  \rho}f(x)
		:=\int_{ \theta\ell(Q)\le |t|\le \Theta \ell(Q)}f(x-\rho(t)) \frac{\ud t}{t}.
	\]

	We estimate $II_2$ first. A key observation is that
	\begin{equation}\label{eq:ec1}
		\lim_{\ell(Q)\to 0}|I_Q|=0,\qquad \text{where\,\,}
		I_Q:= ([-1,-c]\cup[c,1])\cap\{t: |A^{-1}\gamma(t)|\sim \ell(Q)\}.
	\end{equation}
	We will prove this soon. First, note that
	\begin{align*}
		 & II_2                                                                                    \\
		 & = \sum_{j=1}^4\frac 1{|Q|}\int_{\R^2} \int_{|t|\in (c,1]} \big|
		\psi_j ( x)  \big((b\circ A)(x)- (b\circ A)(x- A^{-1} \gamma (t))\big)
		\varphi_j(x-A^{-1}\gamma(t))\big|\frac{\ud t}{t}\ud x                                      \\
		 & \lesssim \sum_{j=1}^4 \frac 1{|Q|}\int_{I_Q}\int_{\R^2 }
		\big|(b\circ A)(x)- \langle b\circ A\rangle_{\Lambda Q}\big|
		|\psi_j ( x)|\cdot |\varphi_j(x-A^{-1}\gamma(t)) |\ud x \ud t                              \\
		 & \,\,+ \sum_{j=1}^4\frac 1{|Q|}\int_{I_Q}\int_{\R^2 }\big|(b\circ A)(x-A^{-1}\gamma(t))-
		\langle b\circ A\rangle_{\Lambda Q}\big| |\psi_j ( x)|
		\cdot |\varphi_j(x-A^{-1}\gamma(t)) |\ud x \ud t                                           \\
		 & \lesssim  \frac 1{|Q|}\int_{I_Q}\int_{\Lambda Q}\big|(b\circ A)(x)-
		\langle b\circ A\rangle_{\Lambda Q}\big|\ud x \ud t                                        \\
		 & \lesssim  |I_Q| \sup_{\substack{R\in \calP                                              \\ \ell(R)\le \Lambda\tau}}\frac 1{|R|}\int_R |b\circ A- \langle b\circ A\rangle_R |\\
		 & \lesssim  |I_Q| \sup_{\substack{R\in \calP                                              \\ \ell(R)\le  \tau}}\frac 1{|R|}\int_R |b\circ A- \langle b\circ A\rangle_R |,
	\end{align*}
	where we have used \eqref{eq:26422} in the last step and $ \Lambda>0$ is a suitable large constant such that $\Lambda Q$ is a parabolic cube
	with $\ell(\Lambda Q)=\Lambda \ell(Q)$ and $\Lambda Q \supset Q\cup W\cup P$.

	To complete the estimate of $II_2$ it remains to prove \eqref{eq:ec1}.
	Aiming for a contradiction, assume that
	\[
		\mathop{\lim\sup}_{\ell(Q)\to 0}|I_Q|>0.
	\]
	Then there exists some $\varepsilon>0$ and a sequence of parabolic cubes $\{Q_j\}$
	such that $\ell(Q_j)\to 0$ and $|I_{Q_j}|\ge \varepsilon$.
	Note that
	$
		|A^{-1}\gamma(t)|\sim |\gamma(t)|.
	$
	Thus, there exist constants  $c_1, c_2$ such that
	\[
		I_{Q_j}\subset ([-1,-c]\cup [c, 1])\cap \{t: c_1\ell(Q_j)
		\le  |\gamma(t)|\le c_2 \ell(Q_j)\},\quad \forall\, j.
	\]
	We may then further pick a subsequence of $\{Q_j\}$, which we denote by $Q_{j_k}$,
	such that $[c_1\ell(Q_{j_k}), c_2\ell(Q_{j_k})]$ are pairwise disjoint.
	Then $   I_{Q_j}$
	are pairwise disjoint and we obtain a contradiction
	\[
		1-c \ge \sum_k |I_{Q_{j_k}}|=+\infty.
	\]
	Thus, \eqref{eq:ec1} holds and for any prescribed $\varepsilon>0$ we may choose $\tau = \tau(\varepsilon)$ sufficiently small so that
	\[
		II_2 \le \varepsilon \sup_{\substack{R\in \calP\\ \ell(R)\le  \tau}}\frac 1{|R|}\int_R |b\circ A- \langle b\circ A\rangle_R |.
	\]

	The rest of the proof is devoted to estimating $II_1$. Write
	\begin{align*}
		II_1 & = \sum_{j=1}^4\frac 1{ |Q|}\left| \int  \psi_j
		[b\circ A, H_{\theta\ell(Q), \Theta \ell(Q)}^{A^{-1}\wt \gamma }
		-H_{\theta\ell(Q), \Theta \ell(Q)}^{  A^{-1}\gamma} ]  \varphi_j  \right|                                 \\
		     & = \sum_{j=1}^4\frac 1{ |Q|}\left| \int  \psi_j   [(b\circ A)- \langle b\circ A\rangle_{\Lambda Q},
		H_{\theta\ell(Q), \Theta \ell(Q)}^{A^{-1}\wt \gamma }
		-H_{\theta\ell(Q), \Theta \ell(Q)}^{  A^{-1}\gamma} ]  \varphi_j  \right|                                 \\
		     & \le \sum_{j=1}^4\frac 1{ |Q|}\Big(\big\| \psi_j \big((b\circ A)
		- \langle b\circ A\rangle_{\Lambda Q}\big)\big\|_{L^2} \big\|
		\big(H_{\theta\ell(Q), \Theta \ell(Q)}^{A^{-1}\wt \gamma }
		-H_{\theta\ell(Q), \Theta \ell(Q)}^{  A^{-1}\gamma} \big) \varphi_j \big\|_{L^2}                          \\
		     & \hspace{2cm}+\| \psi_j\|_{L^2}\big\| \big(H_{\theta\ell(Q), \Theta \ell(Q)}^{A^{-1}\wt \gamma }
		-H_{\theta\ell(Q), \Theta \ell(Q)}^{  A^{-1}\gamma} \big) \big(((b\circ A)
		- \langle b\circ A\rangle_{\Lambda Q})\varphi_j \big)\big\|_{L^2} \Big)                                   \\
		     & \lesssim \| H_{\theta\ell(Q), \Theta \ell(Q)}^{A^{-1}\wt \gamma }
		-H_{\theta\ell(Q), \Theta \ell(Q)}^{  A^{-1}\gamma}\|_{L^2}\Big(\fint_{\Lambda Q}
		|b\circ A- \langle b\circ A\rangle_{\Lambda Q} |^2\Big)^{1/2}                                             \\
		     & \lesssim \| H_{\theta\ell(Q), \Theta \ell(Q)}^{A^{-1}\wt \gamma }
		-H_{\theta\ell(Q), \Theta \ell(Q)}^{  A^{-1}\gamma}\|_{L^2} \sup_{\substack{R\in \calP                    \\ \ell(R)\le  \tau}}\frac 1{|R|}\int_R |b\circ A- \langle b\circ A\rangle_R |,
	\end{align*}where we have used \eqref{eq:26422} and Lemma \ref{lem:JNP} in the last step.
	Clearly, now the proof is completed by applying the following Lemma \ref{lem:ApplyCorput}. Indeed, by that lemma, for any $\eps>0$, we may let the above $\tau =\tau (\eps)$ be so small such that
	\[
		II_1 \le \eps \sup_{\substack{R\in \calP\\ \ell(R)\le  \tau}}\frac 1{|R|}\int_R |b\circ A- \langle b\circ A\rangle_R |.
	\]
	Then combining all the estimates together we finally have
	\begin{align*}
		\sup_{\substack{R\in \calP              \\ \ell(R)\le  \tau}}\frac 1{|R|}\int_R |b\circ A- \langle b\circ A\rangle_R |
		 & \le 2\eps \sup_{\substack{R\in \calP \\ \ell(R)\le  \tau}}\frac 1{|R|}\int_R |b\circ A- \langle b\circ A\rangle_R |+C_{\gamma,p}  \| [b, H_{\gamma}]\|_{L^p\to L^p}.
	\end{align*}
	Hence, by taking $\eps= 1/4$ and invoking Lemma \ref{lem:small} and Lemma \ref{lem:equibmo} we get the desired estimate.
\end{proof}

\begin{lem}\label{lem:ApplyCorput}
	There holds that
	\[
		\lim_{\ell(Q)\to 0} \| H_{\theta\ell(Q), \Theta \ell(Q)}^{A^{-1}\wt \gamma }-H_{\theta\ell(Q), \Theta \ell(Q)}^{  A^{-1}\gamma}\|_{L^2\to L^2} =0.
	\]
\end{lem}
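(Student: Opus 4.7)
The plan is to use parabolic rescaling to reduce to a unit-scale problem, then Plancherel to convert the $L^2$ operator norm into a supremum of a Fourier multiplier difference, and finally split low and high frequencies, handling the high frequencies by oscillatory integral decay that is uniform in $\ell(Q)$.

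First, writing $\eta(t) := (t, t^2)$ one has $\wt\gamma = A\eta$, so $A^{-1}\wt\gamma = \eta$ and $A^{-1}\gamma = \eta + A^{-1}R$. The assumption \eqref{eq:error} together with $R \in C^2$ forces, via Taylor, $R(0) = R'(0) = R''(0) = 0$, and in particular $R^{(k)}(s) = o(|s|^{2-k})$ as $s \to 0$ for $k = 0, 1, 2$. Setting $\lambda := \ell(Q)$ and introducing the parabolic $L^2$-unitary $D_\lambda f(x) := \lambda^{-3/2} f(\lambda^{-1}x_1, \lambda^{-2}x_2)$, the substitution $t = \lambda u$ directly yields
\[
D_\lambda^{-1} H^{A^{-1}\wt\gamma}_{\theta\lambda, \Theta\lambda} D_\lambda = H^{\eta}_{\theta, \Theta}, \qquad D_\lambda^{-1} H^{A^{-1}\gamma}_{\theta\lambda, \Theta\lambda} D_\lambda = H^{\gamma_\lambda}_{\theta, \Theta},
\]
where, denoting the two components of $A^{-1}R$ by $(A^{-1}R)_1, (A^{-1}R)_2$,
\[
\gamma_\lambda(u) := \eta(u) + \bigl(\lambda^{-1}(A^{-1}R)_1(\lambda u),\ \lambda^{-2}(A^{-1}R)_2(\lambda u)\bigr).
\]
The decay of $R, R', R''$ at the origin immediately shows that $\gamma_\lambda \to \eta$ in $C^2([-\Theta, \Theta])$ as $\lambda \to 0$. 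Since $D_\lambda$ is unitary on $L^2$, it suffices to prove $\| H^{\gamma_\lambda}_{\theta, \Theta} - H^{\eta}_{\theta, \Theta} \|_{L^2 \to L^2} \to 0$.

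By Plancherel this operator norm equals $\| m_{\gamma_\lambda} - m_\eta \|_{L^\infty(\R^2)}$, where $m_\rho(\xi) := \int_{\theta \le |u| \le \Theta} e^{-2\pi i \rho(u) \cdot \xi}\, \frac{du}{u}$. Given $\varepsilon > 0$ I would fix a threshold $N = N(\varepsilon)$ large and split the supremum. For $|\xi| \le N$ the elementary bound $|e^{-2\pi i \gamma_\lambda(u) \cdot \xi} - e^{-2\pi i \eta(u)\cdot\xi}| \le 2\pi N \| \gamma_\lambda - \eta \|_{L^\infty[-\Theta,\Theta]}$ together with $\|\gamma_\lambda - \eta\|_\infty \to 0$ gives $\sup_{|\xi|\le N} |m_{\gamma_\lambda}(\xi) - m_\eta(\xi)| \to 0$ as $\lambda \to 0$. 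For $|\xi| > N$ the plan is to control each multiplier individually by standard oscillatory integral methods: writing $\xi = r\omega$ with $\omega = (\omega_1, \omega_2) \in S^1$, one has $\eta'(u)\cdot\omega = \omega_1 + 2u\omega_2$ and $\eta''(u)\cdot\omega = 2\omega_2$, and a case split on whether $|\omega_2|$ is above or below a small absolute threshold shows that, uniformly in $u \in [\theta,\Theta]$ and $\omega \in S^1$, at least one of $|\eta'(u)\cdot\omega|$, $|\eta''(u)\cdot\omega|$ is bounded below by a positive constant. By $C^2$ convergence the same dichotomy, with slightly worsened constants, holds for $\gamma_\lambda$ once $\lambda$ is small. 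Van der Corput's second derivative test (noting that $u \mapsto 1/u$ has bounded variation on $[\theta,\Theta]$) in the first case and a single integration by parts in the second case then yield $|m_\rho(\xi)| \lesssim r^{-\delta}$ for some absolute $\delta > 0$, uniformly in $\lambda$. Choosing $N$ large enough completes the proof.

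The main difficulty I anticipate is ensuring that the oscillatory integral bounds at large $|\xi|$ are genuinely uniform in $\lambda$; this ultimately rests on the stability of the non-vanishing torsion of $\eta$ under $C^2$-small perturbations, which is precisely what the $C^2$ convergence $\gamma_\lambda \to \eta$ delivers.
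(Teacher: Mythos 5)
Your proof is correct, and its skeleton (Plancherel, a low/high frequency split, van der Corput--type oscillatory estimates at high frequencies) is the same as the paper's, but the organization is genuinely different in two respects. First, you conjugate by the parabolic dilations $D_\lambda$ to normalize to unit scale, which converts the hypothesis \eqref{eq:error} on $R$ into the clean statement $\gamma_\lambda\to\eta$ in $C^2([-\Theta,\Theta])$ (your derivation of $R(0)=R'(0)=R''(0)=0$ and $R^{(k)}(s)=o(|s|^{2-k})$ from $R\in C^2$ plus $R(t)=o(t^2)$ is valid); this lets you take a frequency threshold $N=N(\varepsilon)$ independent of $\ell(Q)$, whereas the paper keeps $\ell(Q)$ explicit, proves the difference bound $K\lesssim\varepsilon|\xi|\ell(Q)^2$ at low frequencies and the decay bounds $|\xi|^{-1/2}\ell(Q)^{-1}$, $|\xi|^{-1}\ell(Q)^{-1}+\ell(Q)$ at high frequencies, and balances them at $|\xi|\sim\varepsilon^{-1/2}\ell(Q)^{-2}$. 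Second, in the regime where the first derivative of the phase is large, the paper must confront the fact that $\phi'$ for the true curve need not be monotone, and resolves this by writing $e^{i\phi}=e^{i\wt\phi}e^{-2\pi i\xi\cdot A^{-1}R(t)}$ so that the van der Corput first-derivative test applies to the monotone parabolic phase with an amplitude whose derivative is controlled; you instead bypass monotonicity altogether by a single non-stationary-phase integration by parts on the perturbed phase, which is legitimate because in that regime $|\phi_\lambda'|\gtrsim|\xi|$ while $|\phi_\lambda''|\lesssim|\xi|$ uniformly for small $\lambda$ by the same $C^2$ convergence (you should state this upper bound on $\phi_\lambda''$ explicitly, since the integration by parts produces the term $\phi_\lambda''/(\phi_\lambda')^2$, but it is immediate). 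Your directional dichotomy ($|\omega_2|$ above or below a threshold depending only on $\Theta$) plays the role of the paper's split $|\xi_1|\le|\xi_2|$ versus $|\xi_1|\ge|\xi_2|$, and since the split depends only on $\omega$, the relevant derivative bound holds on the whole interval of integration, as van der Corput requires. In short: your route buys transparency of the uniformity in $\ell(Q)$ and avoids the amplitude-absorption trick, while the paper's route avoids introducing dilations and yields explicit quantitative asymptotics in $\varepsilon$ and $\ell(Q)$.
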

\begin{proof}
	Fix some $f\in L^2$. By Plancherel's theorem we have
	\begin{align*}
		 & \| H_{\theta\ell(Q), \Theta \ell(Q)}^{A^{-1}\wt \gamma }f-H_{\theta\ell(Q), \Theta \ell(Q)}^{  A^{-1}\gamma}f\|_{L^2}
		\\
		 & \hspace{2.5cm}= \Big\| \int_{ \theta\ell(Q)\le |t|\le \Theta \ell(Q)}\left(e^{-2\pi i \xi\cdot  A^{-1}\wt \gamma(t)} -e^{-2\pi i \xi\cdot  A^{-1}  \gamma(t)}\right)\frac{\ud t}{t} \widehat f(\xi)\Big\|_{L^2}.
	\end{align*}

	It suffices to provide uniform bounds in $\xi$ for
	\[
		K = K(\xi)=\left|\int_{ \theta\ell(Q)}^{\Theta \ell(Q)}\left(e^{-2\pi i \xi\cdot A^{-1} \wt \gamma(t)} -e^{-2\pi i \xi\cdot    A^{-1}\gamma(t)}\right)\frac{\ud t}{t}\right|,
	\]
	as the integration over $[-\Theta \ell(Q), -\theta\ell(Q)]$ is similar.

	For any prescribed $\varepsilon>0,$ if $\tau = \tau(\varepsilon)$ is sufficiently small, we always have the bound
	\begin{align*}
		K \lesssim \int_{ \theta\ell(Q)}^{\Theta \ell(Q)}|\xi| |A^{-1}R(t)|\frac{\ud t}{t}\lesssim |\xi|\sup_{t\sim\ell(Q)}|R(t)|\lesssim \varepsilon|\xi|\ell(Q)^2.
	\end{align*}
	This bound is clearly not always sufficient. To obtain a complementary bound we use van der Corput's lemma.

	Set
	$$
		\wt \phi(t)=-2\pi  \xi\cdot A^{-1} \wt \gamma(t)= -2\pi  (\xi_1 t+ \xi_2 t^2),\qquad
		\phi(t)= -2\pi  \xi\cdot  A^{-1}\gamma(t).
	$$
	Let us first assume that $|\xi_1|\le  |\xi_2|.$
	Note that $|\wt \phi''(t)|=4\pi |\xi_2|$ and by continuity of $\gamma'',\wt\gamma''$ and that $\gamma''(0) = \wt\gamma''(0) = (0,2),$ there exists some absolute constant $\delta_1$ such that for $|t|\le \delta_1$ (we may let $\tau<\delta_1/\Theta$) we have $
		|(A^{-1}\gamma)''(t)-(A^{-1}\wt\gamma)''(t)|< 1/2 $,
	which gives that
	\begin{align*}
		|\phi''(t)|\ge |\wt\phi''(t)| - |\wt \phi''(t)-\phi''(t)| & \ge 4\pi |\xi_2| - 2\pi|\xi|	|\gamma''(t)-\wt\gamma''(t)|  \\
		                                                          & \ge4\pi |\xi_2|-  \pi|\xi| \geq 2\pi|\xi_2| \gtrsim |\xi|.
	\end{align*}
	Thus, by van der Corput's lemma \ref{lem:VDCL}, see below, we obtain
	\[
		K \lesssim |\xi|^{-\frac 12}\ell(Q)^{-1}.
	\]
	Then let us look at the case $|\xi_1| \geq |\xi_2|.$ It is clear that $\wt \phi'(t)=-2\pi (\xi_1+ 2\xi_2 t) $ is monotone and then
	\[
		|\wt \phi'(t)|\ge  |\xi_1|\gtrsim |\xi|,\qquad \forall|t|< 1/4.
	\]
	Thus, applying van der Corput's lemma we have
	\[
		\left|\int_{ \theta\ell(Q)}^{\Theta \ell(Q)} e^{i \wt \phi(t)} \frac{\ud t}{t}\right| \lesssim |\xi|^{-1}\ell(Q)^{-1}.
	\]
	However, the monotonicity of $\phi'$ is unclear. Instead, write
	\begin{align*}
		\left|\int_{ \theta\ell(Q)}^{\Theta \ell(Q)} e^{i \phi(t)} \frac{\ud t}{t}\right|= \left|\int_{ \theta\ell(Q)}^{\Theta \ell(Q)} e^{i \wt\phi(t)} e^{- 2\pi i \xi \cdot A^{-1}R(t)}\frac{\ud t}{t}\right|=: \left|\int_{ \theta\ell(Q)}^{\Theta \ell(Q)} e^{i \wt\phi(t)} \psi(t) \ud t\right|.
	\end{align*}
	Now, $\wt \phi'(t)=-2\pi (\xi_1+ 2\xi_2 t) $ is monotone. And note that
	\[
		\sup_{t\in [\theta\ell(Q), \Theta \ell(Q)]}|\psi(t)|=\sup_{t\in [\theta\ell(Q), \Theta \ell(Q)]}\big|e^{- 2\pi i \xi \cdot A^{-1}R(t)} t^{-1}\big|\sim \ell(Q)^{-1}
	\]
	and
	\begin{align*}
		\int_{ \theta\ell(Q)}^{\Theta \ell(Q)} |\psi'(t)|\ud t\lesssim \int_{ \theta\ell(Q)}^{\Theta \ell(Q)}(t^{-2}+ |\xi| |R'(t)| t^{-1})\ud t\le \ell(Q)^{-1}+ |\xi|\ell(Q),
	\end{align*}
	where we simply used
	\[
		|R'(t)|= |\gamma'(t)-\wt \gamma'(t)|\le  |\gamma'(t)- \gamma'(0)|+|\wt \gamma'(t)- \wt \gamma'(0)|\lesssim t.
	\]
	Thus, by van der Corput's lemma  we get
	\[
		\left|\int_{ \theta\ell(Q)}^{\Theta \ell(Q)} e^{i\wt\phi(t)} \psi(t) \ud t\right|\lesssim |\xi|^{-1}(\ell(Q)^{-1}+ |\xi|\ell(Q))\le |\xi|^{-1}\ell(Q)^{-1}+\ell(Q).
	\]
	Gathering the above estimates together we have obtained
	\begin{align*}
		K & \lesssim \min\left(\varepsilon|\xi|\ell(Q)^2,\quad 1_{|\xi_1|\leq |\xi_2|}|\xi|^{-\frac 12}\ell(Q)^{-1} +   1_{|\xi_1|\geq |\xi_2|}\left(|\xi|^{-1}\ell(Q)^{-1}+\ell(Q)\right)\right).
	\end{align*}
	From the above estimate we obtain
	\begin{equation}
		K\lesssim
		\begin{cases}
			\varepsilon^{1/2},\qquad                                     & |\xi|\le \varepsilon^{-1/2}\ell(Q)^{-2}, \\
			\varepsilon^{1/4}+ \varepsilon^{1/2}\ell(Q) + \ell(Q),\qquad & |\xi|>  \varepsilon^{-1/2}\ell(Q)^{-2},
		\end{cases}
	\end{equation}
	which gives the desired asymptotics.
\end{proof}

\begin{lem}[Van der Corput's lemma]\label{lem:VDCL}
	Let $I = [a,b]$ be a fixed interval and $\phi,\psi:I\to \R.$ Denote
	\[
		I(a,b) = \int_a^b e^{i\phi(t)}\psi(t)\ud t.
	\]
	Then, there exist absolute constants $C,C_k>0$ such that the following hold.
	\begin{enumerate}
		\item If $|\phi'(t)|\geq \lambda > 0$ and $\phi'$ is monotone, then
		      \[
			      |I(a,b)|\leq C\lambda^{-1}\Big(\| \psi\|_{L^{\infty}(I)} + \|\psi'\|_{L^1(I)}\Big).
		      \]
		\item If $\phi\in C^k[a,b]$ and $|\phi^{(k)}(t)|>\lambda > 0,$ then
		      \[
			      |I(a,b)|\leq C_k\lambda^{-1/k}\Big(\| \psi\|_{L^{\infty}(I)} + \|\psi'\|_{L^1(I)}\Big).
		      \]
	\end{enumerate}
\end{lem}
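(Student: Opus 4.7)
The plan is to split the argument into a purely oscillatory bound on the unweighted integral followed by a standard integration by parts that brings $\psi$ back in. Set
\[
V(t):=\int_a^t e^{i\phi(s)}\,\ud s,\qquad t\in[a,b].
\]
The main task is to establish
\[
\sup_{t\in[a,b]}|V(t)|\le C_k\lambda^{-1/k}
\]
under the hypotheses of (1) with $k=1$, respectively of (2) with the given $k\ge 2$. Once this is in hand, integration by parts (using $V(a)=0$) yields
\[
I(a,b)=\psi(b)V(b)-\int_a^b V(t)\psi'(t)\,\ud t,
\]
so
\[
|I(a,b)|\le C_k\lambda^{-1/k}\Big(\|\psi\|_{L^\infty(I)}+\|\psi'\|_{L^1(I)}\Big),
\]
which is both (1) and (2) simultaneously. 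Thus the entire proof reduces to the uniform bound on $V$.

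For (1), I would note that $h(t):=1/\phi'(t)$ is well-defined since $|\phi'|\ge\lambda>0$, is monotone as the reciprocal of a monotone function of constant sign, and satisfies $\|h\|_\infty\le 1/\lambda$. Writing $e^{i\phi(t)}=\frac{1}{i\phi'(t)}\frac{d}{dt}e^{i\phi(t)}$ and applying the second mean value theorem (Bonnet) to the monotone $h$ on any subinterval $[a,t]\subset[a,b]$ gives
\[
|V(t)|=\Big|\int_a^t h(s)\,d\bigl(e^{i\phi(s)}/i\bigr)\Big|\le 4\|h\|_{L^\infty[a,t]}\le 4/\lambda,
\]
which is the $k=1$ case with $C_1=4$.

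For (2), I would proceed by induction on $k\ge 2$. Since $\phi^{(k)}$ is continuous on $[a,b]$ with $|\phi^{(k)}|>\lambda>0$, it has constant sign, so $\phi^{(k-1)}$ is strictly monotone and vanishes at most once. Fix a parameter $\delta>0$ to be optimized. The set $E_\delta:=\{t\in[a,b]:|\phi^{(k-1)}(t)|\le\delta\}$ is a (possibly empty) interval on which, by the mean value theorem and $|\phi^{(k)}|>\lambda$, one has $|E_\delta|\le 2\delta/\lambda$; on it the trivial bound contributes at most $2\delta/\lambda$ to $|V(t)|$. On the complement $[a,b]\setminus E_\delta$, which consists of at most two subintervals, $\phi^{(k-1)}$ is monotone with $|\phi^{(k-1)}|>\delta$, so for $k=2$ part (1) applies directly (with $\lambda$ replaced by $\delta$) and for $k\ge 3$ the inductive hypothesis applies, each piece contributing at most $2C_{k-1}\delta^{-1/(k-1)}$. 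Summing and choosing $\delta=\lambda^{(k-1)/k}$ to balance the two terms yields $|V(t)|\le C_k\lambda^{-1/k}$, completing the induction.

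The main technical obstacle is bookkeeping in the inductive step of (2): one must verify that on each of the (at most two) monotone subintervals of $[a,b]\setminus E_\delta$ the inductive hypothesis genuinely applies with $\lambda$ replaced by the correct lower bound $\delta$ on $|\phi^{(k-1)}|$, and that the $k=2$ base step cleanly invokes the monotonicity hypothesis of (1) --- which is granted for free since $\phi''$ has constant sign by $|\phi''|>\lambda>0$. The optimization in $\delta$ and the final integration by parts are then mechanical. A minor regularity point is that the integration-by-parts identity for $I(a,b)$ presumes $\psi$ is absolutely continuous with $\psi'\in L^1(I)$; under the stated hypothesis $\|\psi'\|_{L^1(I)}<\infty$ this is standard, and the general case follows by approximation.
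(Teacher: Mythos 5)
The paper does not prove this lemma at all: it is the classical van der Corput lemma, stated and used as a known black-box result. Your argument is correct and is essentially the canonical textbook proof --- reduce to the unweighted integral $V(t)=\int_a^t e^{i\phi}$ by integrating by parts against $\psi$, prove the $k=1$ bound using that $1/\phi'$ is monotone and bounded by $1/\lambda$, and induct on $k$ by excising the interval $E_\delta=\{|\phi^{(k-1)}|\le\delta\}$ (of length at most $2\delta/\lambda$ by the mean value theorem), applying the previous case on the at most two complementary intervals, and optimizing with $\delta=\lambda^{(k-1)/k}$. Two harmless remarks. First, the constant $4$ in the Bonnet step is not quite right: the second mean value theorem applies to real integrands, so one must treat the real and imaginary parts of $e^{i\phi}/i$ separately, which yields a bound like $8\|h\|_\infty$ rather than $4\|h\|_\infty$; this is immaterial since only absolute constants are claimed. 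Second, the constant sign of $\phi'$ in part (1), which you use to conclude that $1/\phi'$ is monotone, deserves one line of justification: $\phi'$ is a derivative and hence has the intermediate value (Darboux) property, so $|\phi'|\ge\lambda>0$ indeed forces $\phi'$ to keep a single sign (for $k\ge2$ the analogous point is immediate from continuity of $\phi^{(k)}$). With these cosmetic points addressed, the proof is complete and matches the standard argument one would cite for this lemma.
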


\bibliography{refH}
\end{document}